\newtheorem{thm}{Theorem}[section]
\newtheorem{lem}[thm]{Lemma}
\newtheorem{cor}[thm]{Corollary}
\newtheorem{prop}[thm]{Proposition}
\newtheorem{remark}[thm]{Remark}
\newcommand{\R}{\mathbb{R}}
\begin{document}
\title[Asymptotic Behavior of Solutions of a Reaction Diffusion Equation]
{Asymptotic Behavior of Solutions of a Reaction Diffusion Equation with Free Boundary
Conditions$^\S$}
 \thanks{$\S$ This research was partly supported by the NSFC (No. 11271285). }
\author[J. Cai, B. Lou and M. Zhou]{Jingjing Cai$^\dag$,  Bendong Lou$^\ddag$
and Maolin Zhou$^\sharp$}
\thanks{$\dag$ School of Mathematics and Physics, Shanghai University of Electric Power,
Shanghai 200090, China}
\thanks{$\ddag$ Department of Mathematics, Tongji University, Shanghai 200092, China.}
\thanks{$^\sharp$ Graduate School of Mathematical Sciences, The University of Tokyo,
Tokyo 153-8914, Japan}
\thanks{{\bf Emails:} {\sf cjjing1983@163.com} (J. Cai), {\sf
blou@tongji.edu.cn} (B. Lou), {\sf zhouml@ms.u-tokyo.ac.jp} (M. Zhou) }

\begin{abstract} We study a nonlinear diffusion equation of the
form $u_t=u_{xx}+f(u)\ (x\in [g(t),h(t)])$ with free boundary conditions
$g'(t)=-u_x(t,g(t))+\alpha$ and $h'(t)=-u_x(t,g(t))-\alpha$ for some
$\alpha>0$. Such problems may be used to describe the spreading of a
biological or chemical species, with the free boundaries representing
the expanding fronts. When $\alpha=0$, the problem was recently
investigated by \cite{DuLin, DuLou}. In this paper we consider the case
$\alpha>0$. In this case shrinking (i.e. $h(t)-g(t)\to 0$) may happen,
which is quite different from the case $\alpha=0$.
Moreover, we show that, under certain conditions on $f$, shrinking is
equivalent to vanishing (i.e. $u\to 0$), both of them happen as $t$
tends to some finite time. On the other hand, every bounded and positive
time-global solution converges to a nonzero stationary solution as $t\to \infty$.
As applications, we consider monostable and bistable types of nonlinearities,
and obtain a complete description on the asymptotic behavior of the solutions.
\end{abstract}

\subjclass[2010]{35K20, 35K55, 35R35}
\keywords{Nonlinear diffusion equation, free boundary problem,
asymptotic behavior, monostable, bistable} \maketitle

\section{Introduction}

In this paper we consider the following problem
\begin{equation}\label{p}
\left\{
\begin{array}{ll}
 u_t = u_{xx} + f(u), &  g(t)< x<h(t),\ t>0,\\
 u(t,g(t))= u(t,h(t))=0 , &  t>0,\\
g'(t)= - u_x(t, g(t)) +\alpha, & t>0,\\
 h'(t) = - u_x (t, h(t)) - \alpha, & t>0,\\
-g(0)=h(0)= h_0,\ \ u(0,x) =u_0 (x),& -h_0\leq x \leq h_0,
\end{array}
\right.
\end{equation}
where $x=g(t)$ and $x=h(t)$ are moving boundaries to be
determined together with $u(t,x)$, $\alpha >0$ is a given constant,
\begin{equation}\label{cond1}
f: [0,\infty) \to \R \mbox{ is a } C^1 \mbox{ function} ,\quad f(0)=0.
\end{equation}
The initial function $u_0$ belongs to  $ \mathscr {X}(h_0)$ for some
$h_0>0$, where
\begin{equation}\label{def:X}
\begin{array}{ll}
\mathscr {X}(h_0):= \Big\{ \phi \in C^2 ([-h_0,h_0]): & \phi(-h_0)=
\phi (h_0)=0,\; \phi'(-h_0)>0,\\ & \phi'(h_0)<0,\;
 \phi(x) >0 \ \mbox{in } (-h_0,h_0)\;\Big\}.
\end{array}
\end{equation}
For any given $h_0>0$ and $u_0 \in \mathscr {X}(h_0)$, by a
(classical) solution of \eqref{p} on the time-interval $[0,T]$ we
mean a triple $(u(t,x), g(t), h(t))$ belonging to  $ C^{1,2}(G_T)
\times C^1 ([0,T])\times C^1 ([0,T])$, such that all the identities
in \eqref{p} are satisfied pointwisely, where
\[
G_T:=\big\{(t,x): t\in (0,T],\; x\in [g(t), h(t)]\big\}.
\]

Recently, problem \eqref{p} with $\alpha =0$ was studied in \cite{DuLin, DuLou}, etc.
When $f(u)$ is of monostable or bistable type of nonlinearity, the problem may be used to
describe the spreading of a new biological or chemical species. The free
boundaries $x=g(t)$ and $x=h(t)$ represent the spreading fronts of
the species whose density is represented by $u(t,x)$. The free boundary condition
with $\alpha=0$ is a Stefan one. Its biological meaning can be found in
\cite{BDK, DuLin}. In \cite{DuLin, DuLou} the authors studied the asymptotic behavior of
the solutions of \eqref{p} (with $\alpha=0$) and proved that
any bounded time-global solution converges to a stationary one as $t\to \infty$.
Among others, their results show that vanishing (i.e. $u\to 0$) may happen even for an
equation with logistic nonlinearity provided the initial data is small enough. Such a result show that
problem \eqref{p} with $\alpha =0$ has advantages comparing with the Cauchy problems.
(The Cauchy problem for an equation with logistic nonlinearity has
{\it hair-trigger} effect, that is, any positive solution converges to a positive constant,
cf. \cite{AW1, AW2, DuLin, DuLou}).
In the last two years, \cite{DG1,DG2, DMW} also studied the corresponding problems of
\eqref{p} with $\alpha=0$ in higher dimension spaces.

In this paper we consider the free boundary condition with a real number $\alpha>0$.
We use this parameter to denote a spreading resistant force representing the
reluctance of the individuals of the species to move away from the population region.
Intuitively, the presence of $\alpha>0$ makes the solution more difficult to spread
than the case where $\alpha=0$. Indeed, $h'(t)>0$ only if $ u_x(t,h(t)) <-\alpha$, that is,
the solution spreads only if the pressure at the boundary is big enough.

The main purpose of this paper is to study the asymptotic behavior of bounded
solutions of \eqref{p}. As we will see below, for a solution $(u,g,h)$ of \eqref{p},
either {\it spreading} (i.e. $h(t),-g(t)\to \infty$ and $u$ converges to a positive constant),
or {\it vanishing} (i.e. $u \to 0$), or {\it shrinking} (i.e. $h(t) - g(t) \to 0$),
or {\it transition} (i.e. $u$ converges to a stationary solution with compact support) happens.
Comparing with the results in \cite{DuLin, DuLou}, the shrinking phenomena is a
new one since it does not happen in case $\alpha =0$.

A simple variation of the arguments in \cite{DuLin} shows
that, for any $h_0>0$ and $u_0\in  \mathscr {X}(h_0)$, \eqref{p} has a unique solution
defined on some maximal time interval $(0, T_*)$ with $T_*\in (0, \infty]$.
No matter $T_*<\infty$ or $T_*=\infty$ we will show that $g(t)$ and $h(t)$
have limits:

\begin{prop}\label{prop:exist limits}
Let $(u,g,h)$ be a solution of \eqref{p} on some maximal time interval $[0, T_*)$
with $T_*\in (0, \infty]$. Then the following limits exist:
\begin{equation}\label{exist limits}
g_*:= \lim\limits_{t\to T_*} g(t) \quad \mbox{and} \quad
h_*:= \lim\limits_{t\to T_*} h(t).
\end{equation}
\end{prop}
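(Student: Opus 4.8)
The plan is to locate, inside the free boundary conditions, two auxiliary quantities that are genuinely monotone even though neither $g$ nor $h$ need be. First I would record the sign of $u_x$ at the two fronts: since $u(t,\cdot)$ vanishes at $x=g(t),h(t)$ and is positive on the open interval $(g(t),h(t))$, the Hopf boundary lemma gives $u_x(t,g(t))>0$ and $u_x(t,h(t))<0$ for every $t\in(0,T_*)$. Substituting into \eqref{p} yields $\frac{d}{dt}\big(h(t)+\alpha t\big)=-u_x(t,h(t))>0$ and $\frac{d}{dt}\big(g(t)-\alpha t\big)=-u_x(t,g(t))<0$. Hence $h(t)+\alpha t$ is strictly increasing and $g(t)-\alpha t$ is strictly decreasing on $(0,T_*)$, so each possesses a limit in the extended reals; write $m_*:=\lim_{t\to T_*}(h(t)+\alpha t)\in(-\infty,+\infty]$ and $n_*:=\lim_{t\to T_*}(g(t)-\alpha t)\in[-\infty,+\infty)$.

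From this the conclusion follows in almost all regimes. Writing $h(t)=(h(t)+\alpha t)-\alpha t$ as a monotone increasing function minus the continuous function $\alpha t$, one sees that $\lim_{t\to T_*}h(t)$ exists \emph{unless both terms diverge to $+\infty$}: indeed, if $m_*<+\infty$ then $h(t)\to m_*-\alpha T_*$ (finite when $T_*<\infty$, and $-\infty$ when $T_*=\infty$), while if $m_*=+\infty$ but $T_*<\infty$ then $\alpha t\to\alpha T_*$ and $h(t)\to+\infty$. The symmetric discussion applies to $g$ via $g(t)-\alpha t$, and when $h(t)\to-\infty$ the bound $g(t)\le h(t)$ forces $g(t)\to-\infty$ as well. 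Thus the existence of the limits can fail only in the single case $T_*=\infty$ together with $m_*=+\infty$ (respectively $n_*=-\infty$), and I would isolate this as the one case requiring real work.

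This remaining case is where I expect the main obstacle to lie, since there $h(t)=(h(t)+\alpha t)-\alpha t$ is an indeterminate difference of two quantities both tending to $+\infty$, and the monotonicity of $h+\alpha t$ says nothing about $h$ itself; a bounded solution converging to a stationary state with nonzero boundary slope already has $m_*=+\infty$ while $h$ stays bounded. To treat it I would first show a global solution is bounded (otherwise it blows up in finite time), so that parabolic boundary estimates bound $h'$ and $g'$ and keep the domain nondegenerate, and it then remains to rule out indefinite oscillation of $h$ as $t\to\infty$. Differentiating the identity $u(t,h(t))\equiv0$ gives $u_t(t,h(t))=-u_x(t,h(t))\,h'(t)$, so $\operatorname{sign}h'(t)=\operatorname{sign}u_t(t,h(t))$, and eventual monotonicity of $h$ (hence its limit) would follow once $u_t$ has an eventually fixed sign near the right front. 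I would obtain this from a zero-number (Sturmian) argument applied to $v:=u_t$, which solves the linear parabolic equation $v_t=v_{xx}+f'(u)\,v$: the number of spatial sign changes of $v(t,\cdot)$ is finite and nonincreasing in $t$, hence eventually constant, after which $v$ cannot change sign infinitely often at the boundary. Controlling this zero number on the moving domain $[g(t),h(t)]$ is the delicate point; alternatively, an energy (Lyapunov) functional for \eqref{p} forcing convergence of the solution would give the same conclusion.
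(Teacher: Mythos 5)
Your opening reduction is correct and is a nice way to organize the easy cases: by the Hopf lemma $-u_x(t,h(t))>0$, so $h(t)+\alpha t$ is strictly increasing and $g(t)-\alpha t$ strictly decreasing (this is exactly the a priori bound $-g'(t),h'(t)>-\alpha$ of Lemma \ref{lem:bound-general}), and the only situation not settled immediately is $T_*=\infty$ with $h(t)+\alpha t\to+\infty$ (resp.\ $g(t)-\alpha t\to-\infty$). But that remaining case is where the entire content of the proposition lies, and neither tool you propose for it survives scrutiny. First, the dichotomy ``a global solution is bounded, otherwise it blows up in finite time'' is false for general $f$ satisfying only \eqref{cond1} (take $f(u)=Ku$), and the proposition is stated without \eqref{cond2} or a boundedness hypothesis. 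Second, and more seriously, the Sturmian argument applied to $v:=u_t$ is circular: differentiating $u(t,h(t))\equiv 0$ and using the free boundary condition gives $v(t,h(t))=-u_x(t,h(t))h'(t)=(h'(t)+\alpha)\,h'(t)$, so $v$ vanishes and changes sign on the lateral boundary precisely at the times when $h'$ does. The zero number of $v$ on the moving domain is therefore \emph{not} nonincreasing --- a new sign change can be injected at $x=h(t)$ each time $h'$ changes sign --- and controlling those sign changes of $h'$ is exactly what you set out to prove. The energy alternative also fails: a direct computation gives
$$
\frac{d}{dt}\int_{g(t)}^{h(t)}\Bigl(\tfrac12 u_x^2-F(u)\Bigr)dx
=-\int_{g(t)}^{h(t)}u_t^2\,dx-\tfrac12\,(h'+\alpha)^2h'+\tfrac12\,(\alpha-g')^2g',
$$
whose boundary terms have no definite sign unless the domain is already known to be expanding; and deducing the limits of $g,h$ from ``convergence of $u$'' would invert the paper's logic, since Theorem \ref{thm:convergence} presupposes Proposition \ref{prop:exist limits}.

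The paper's resolution (Lemma \ref{lem:finite-osc}) is to run the intersection-number argument not against $u_t$ but against translates of the \emph{stationary} solution: for a fixed level $b$ one sets $\eta(t,x)=u(t,x)-V_\alpha(x-b+2\ell)$, where $V_\alpha$ is the compactly supported solution of \eqref{ellip-p} whose boundary slopes are exactly $\pm\alpha$. Then $\eta$ satisfies a linear parabolic equation, its zero set is controlled by Angenent's theorem, and --- this is the step your proposal is missing --- the free boundary condition converts the sign of $\eta_x$ at a time when $h(t)$ crosses $b$ into the sign of $h'$ at that time (e.g.\ $\eta_x(t_3,b)>0$ would force $h'(t_3)=-u_x(t_3,b)-\alpha<-V_\alpha'(2\ell)-\alpha=0$). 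This forces the number of zeros of $\eta(t,\cdot)$ to drop strictly at each crossing, so $h(t)-b$ changes sign only finitely often for every $b$, which yields the existence of $\lim_{t\to T_*}h(t)$ without any boundedness assumption. You would need to replace your step on eventual sign-definiteness of $u_t$ at the front by an argument of this type.
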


\noindent
This proposition is proved by the fact that $h(t)$, as well as $g(t)$, does not move across
any fixed point for infinitely many times (see details in subsection 2.5).
We write $I_*:= [g_*, h_*]$ in what follows. In particular, when $T_* =\infty$ we also write $g_*, h_*$ and $I_*$
as $g_\infty, h_\infty$ and $I_\infty$, respectively.

When $T_* =\infty$, the solution is a time-global one and so we can study its asymptotic behavior.
On the other hand, $T_*$ may be a finite number for some reasons like
blow up, shrinking or vanishing, etc.
We are not concerned with the blow up phenomena in this paper, so we impose
the following condition
\begin{equation}\label{cond2}
f(u)\leq Ku \ \mbox{ for all } u\geq 0 \mbox{ and some } K>0
\end{equation}
to exclude the possibility that $u$ blows up in finite time.

Recall that we introduced $\alpha>0$ in the free boundary conditions.
Hence, the properties $h'(t)>0$ and $g'(t)<0$ in case $\alpha=0$
(as shown in \cite{DuLin, DuLou}) are no longer necessarily to be true.
Instead, the domain $I(t):= [g(t),h(t)]$ may shrink, even, to a point.
For some $\widetilde{T}\in (0,\infty]$,
we say that {\it shrinking happens, or the interval $[g(t),h(t)]$ shrinks as $t\to \widetilde{T}$} if

\bigskip
\noindent
\underline{\it Shrinking}: \ \ $h(t) -g(t) >0 $ for $t\in [0, \widetilde{T})$ and
$\lim_{t\to \widetilde{T}}h(t) = \lim_{t\to \widetilde{T}}g(t) \in \R$.
\bigskip

\noindent
This is a new phenomena which never happens when $\alpha=0$ (cf. \cite{DuLin, DuLou}).
A related phenomena is {\it vanishing}: for some $\widetilde{T}\in (0,\infty]$,
we say that {\it $u$ vanishes, or vanishing happens as $t\to \widetilde{T}$} if
$$
\underline{Vanishing}: \ \
 \left\{
 \begin{array}{l}
 (a)\ \tilde{g}:= \lim_{t\to \widetilde{T}}g(t) <  \tilde{h}:=\lim_{t\to \widetilde{T}}h(t),
  \mbox{ and } u(t,\cdot) \to 0 \mbox{ as } t\to \widetilde{T} \\
  \ \ \ \mbox{\ \ locally uniformly in }  (\tilde{g}, \tilde{h}), \ or\\
  (b)\ \mbox{shrinking happens as } t\to \widetilde{T} \mbox{ and }
  \lim_{t\to \widetilde{T}} \|u(t,\cdot)\|_{L^\infty ([g(t),h(t)])} =0.
\end{array}
 \right.
 \hskip 30mm
$$

\noindent
As a definition we list two cases for vanishing, we will show later that
case (a) indeed does not occur (see Theorem \ref{thm:vanishing}, Lemma \ref{lem:T_*<infty}
and Remark \ref{remark:not (a)}).

For the sake of clarity, when shrinking or vanishing happens as $t\to \widetilde{T}$
for some finite time $\widetilde{T}$, in this paper we always say that the maximal existence interval
of the solution is $[0,\widetilde{T})$.

On shrinking and vanishing phenomena we have the following questions:
Whether vanishing or shrinking really happens for some solutions? If one of them
happens, does the other one happen at the same time?
Do they happen in finite or infinite time?
The first question is answered in Proposition \ref{prop:vanish cond}, where
we give some sufficient conditions which guarantee that $u$ vanishes, for example,
\begin{equation}\label{cond3}
\sup\limits_{u\geq 0} F(u) >0,\quad \mbox{ where } F(u):= \int_0^u f(s) ds£¬
\end{equation}
and $\alpha >\alpha_0 := [2\sup_{u\geq 0} F(u)]^{1/2}$.
The second and the third  questions are answered by the following theorem.

\begin{thm}\label{thm:vanishing}
Assume \eqref{cond1}. Let $(u,g,h)$ be a solution of problem \eqref{p} on some maximal time interval
$(0, T_*)$. Then the following statements are equivalent:
\begin{itemize}
\item[(i)] $T_* <\infty$;
\item[(ii)] vanishing happens as $t\to T_*$;
\item[(iii)] shrinking happens as $t\to T_*$.
\end{itemize}
\end{thm}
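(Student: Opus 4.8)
The plan is to establish the cyclic chain $(i)\Rightarrow(ii)\Rightarrow(iii)\Rightarrow(i)$, which yields the full equivalence. Throughout write $\ell(t):=h(t)-g(t)$; subtracting the two free boundary conditions in \eqref{p} gives
\[
\ell'(t)=\big(u_x(t,g(t))-u_x(t,h(t))\big)-2\alpha=:P(t)-2\alpha,
\]
where $P(t)\ge 0$ because $u>0$ in $(g(t),h(t))$ while $u=0$ at the endpoints, so $u_x(t,g(t))>0$ and $u_x(t,h(t))<0$ by the Hopf lemma. By Proposition \ref{prop:exist limits} we have $\ell(t)\to \ell_*:=h_*-g_*\ge 0$, and shrinking is precisely the assertion $\ell_*=0$ together with $\ell(t)>0$ for $t<T_*$.

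For $(i)\Rightarrow(ii)$ I would first invoke \eqref{cond2}: comparing $\|u(t,\cdot)\|_\infty$ with the solution of $y'=Ky$ shows that $u$ stays bounded on every finite time interval, so $T_*<\infty$ cannot be caused by blow up. The local theory (the variation of \cite{DuLin}) continues the solution as long as $\|u\|_\infty$ is bounded and $\ell(t)$ is bounded below by a positive constant; since $[0,T_*)$ is maximal with $T_*<\infty$, the only remaining obstruction is $\liminf_{t\to T_*}\ell(t)=0$, whence $\ell_*=0$, i.e.\ shrinking occurs. To upgrade this to $\|u\|_\infty\to0$ I would use the energy identity (whose boundary terms vanish since $u=0$ at $g(t),h(t)$),
\[
\frac{d}{dt}\int_{g(t)}^{h(t)}u^2\,dx=-2\int_{g(t)}^{h(t)}u_x^2\,dx+2\int_{g(t)}^{h(t)}u\,f(u)\,dx\le 2\Big[K-\big(\tfrac{\pi}{\ell(t)}\big)^2\Big]\int_{g(t)}^{h(t)}u^2\,dx,
\]
using the Poincar\'e inequality on the interval of length $\ell(t)$ together with $u f(u)\le Ku^2$ from \eqref{cond2}. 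Once $\ell(t)$ is small the bracket is negative, so $\|u\|_{L^2}\to0$; a standard parabolic bootstrap then gives $\|u(t,\cdot)\|_\infty\to0$. This is exactly vanishing in the sense of case (b).

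For $(ii)\Rightarrow(iii)$ the point is to exclude case (a) of vanishing. If (a) held, then $\ell_*=\tilde h-\tilde g>0$, so the free boundaries converge to a nondegenerate interval; since $g'$ and $h'$ are bounded the fronts are uniformly regular, and parabolic estimates up to the boundary, combined with $u\to0$ locally uniformly, force $P(t)=u_x(t,g(t))-u_x(t,h(t))\to0$. Then $\ell'(t)\to-2\alpha<0$, which drives $\ell(t)\to0$ and contradicts $\ell_*>0$. Hence vanishing must be of type (b), which by definition includes shrinking, giving (iii).

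The remaining implication $(iii)\Rightarrow(i)$ is the heart of the matter, and I expect it to be the main obstacle. Assume shrinking occurs but, for contradiction, $T_*=\infty$; then $\ell(t)>0$ for all $t$ and $\ell(t)\to0$ as $t\to\infty$. The guiding idea is that on the shrinking interval the decay of $u$ is governed by the principal Dirichlet eigenvalue $(\pi/\ell(t))^2$, which tends to $+\infty$, so $\|u(t,\cdot)\|_\infty$ decays faster than $\ell(t)$ itself (note that $\ell'\ge-2\alpha$ limits how fast $\ell$ can shrink). Consequently the boundary slopes, which are controlled by $\|u\|_\infty/\ell$, tend to $0$, so $P(t)\to0$ and $\ell'(t)\le-\alpha$ for all large $t$; but then $\ell$ would reach $0$ in finite time, contradicting $T_*=\infty$. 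The delicate step — the one I would spend the most effort on — is making ``the boundary slopes are controlled by $\|u\|_\infty/\ell$ and tend to $0$'' rigorous: this requires quantitative parabolic gradient estimates on a degenerating (shrinking) domain, together with a careful comparison of the decay rate of $\|u\|_\infty$ against the shrinking rate of $\ell$. Once $P(t)\to0$ is established, the finite-time contradiction, and hence the whole equivalence, follows.
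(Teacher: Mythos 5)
Your cyclic skeleton and your identification of the crux are both correct: everything reduces to showing that the boundary slopes eventually satisfy $|u_x(t,g(t))|,|u_x(t,h(t))|\le\alpha/2$, so that $\ell'(t)\le-\alpha$ and the interval collapses in finite time. But that is precisely the step you leave open, and the route you sketch for it does not work as stated. First, the heuristic ``boundary slopes $\lesssim\|u\|_\infty/\ell$'' is the wrong bound and the proposed ``quantitative parabolic gradient estimates on a degenerating domain'' are exactly what degenerate as $\ell(t)\to0$; the same objection applies to your Poincar\'e/$L^2$ step (which is in any case vacuous, since $\|u\|_{L^2}^2\le\|u\|_\infty^2\,\ell(t)\to0$ follows from boundedness alone, and the $L^2\to L^\infty$ bootstrap has constants blowing up as the domain shrinks). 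The paper's tool is the quadratic boundary barrier $U(t,x)=C\bigl[2M(h(t)-x)-M^2(h(t)-x)^2\bigr]$ with $M$ an \emph{absolute} constant (Lemmas \ref{lem:bound-general}, \ref{lem:uniform vanish}, \ref{lem:s->v}): it gives $\|u\|_\infty\le4MC\delta$ once $h-g\le2\delta$ (so shrinking $\Rightarrow$ vanishing without any bootstrap), and it bounds the boundary slope by $O(\sup u)$ with no division by $\ell$.

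Second, even with the barrier, bounding the slope of $u$ itself by $O(\varepsilon)$ \emph{uniformly for all later times} requires controlling $\|u(T_\varepsilon,\cdot)\|_{C^1}$ relative to $\varepsilon$, which is not free. The paper's key device (Lemma \ref{lem:T_*<infty}(i)), absent from your proposal, is to avoid estimating $u$ directly: once $u\le\varepsilon$, one compares with the solution $\eta$ of an auxiliary free boundary problem on a larger interval whose nonlinearity $\bar f(\eta)=2K_1\eta(1-\eta/2\varepsilon)\ge f(\eta)$ is capped so that $\eta\le2\varepsilon$ forever and whose explicit initial data has $C^1$ norm $O(\varepsilon)$; the barrier applied to $\eta$ then yields $\bar h'\le-\alpha/2$, $\bar g'\ge\alpha/2$ for \emph{all} $t$, hence collapse of $\bar h-\bar g$ in finite time, which the comparison principle transfers to $h-g$. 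This proves (ii)$\Rightarrow$(i) directly and also repairs your (ii)$\Rightarrow$(iii): case (a) is excluded not by ``$\ell'\to-2\alpha$'' (which, note, gives no contradiction when $T_*<\infty$) but by combining the finite-time conclusion just obtained with the positive subsolution $\lambda e^{-\lambda_1t}\cos\frac{\pi x}{2d}$ on a fixed subinterval, which shows $u$ cannot vanish in finite time over a nondegenerate interval (Lemma \ref{lem:T_*<infty}(ii)). Without the auxiliary comparison problem (or an equivalent uniform-in-time slope bound), the proof is incomplete at its central point.
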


\noindent
This theorem is proved at the end of of section 3.
For each time-global solution $(u,g,h)$, Theorem \ref{thm:vanishing} implies that
vanishing and shrinking do not happen. So we are interested in the asymptotic
behavior for time-global solutions and we have the following general convergence theorem,
which is an analogue of
Theorem 1.1 in \cite{DM} and Theorem 1.1 in \cite{DuLou}.

\begin{thm}\label{thm:convergence}
Assume \eqref{cond1}. Let $(u, g,h)$ be a solution of \eqref{p}
that is defined for all $t>0$. Assume that $u(t,x)$ is bounded, namely
\[
u(t,x)\leq C \mbox{ for all $t>0$, \ $x\in [g(t), h(t)]$ and some
$C>0$}.
\]
Then $I_\infty := (g_\infty, h_\infty)$ is either a finite interval or $I_\infty =\R^1$.

Moreover, if $I_\infty$ is a finite interval, then $h_\infty = g_\infty + 2 \ell$ for some
$\ell >0$ (cf. \eqref{def-ell}) and
$$
\lim_{t\to\infty} u(t, \cdot)= V_\alpha (\cdot) \mbox{ locally uniformly in } (g_\infty, h_\infty),
$$
where $V_\alpha$ is the unique solution of
\begin{equation} \label{ellip_0}
   \left\{
   \begin{array}{l}
   v'' +f(v)=0,\quad x\in (g_\infty, h_\infty),\\
   v(g_\infty) =v(h_\infty)=0,\ \ v' (g_\infty) =\alpha,\ v'(h_\infty) = - \alpha.
   \end{array}
   \right.
 \end{equation}

If $(g_\infty, h_\infty)=\R^1$ then either $\lim_{t\to\infty} u(t,x)$ is a positive constant
solution of
\begin{equation}\label{ellip_1}
 v_{xx}+f(v)=0,\; x\in \R^1,
 \end{equation}
 or
 \begin{equation}\label{to V(gamma)}
 u(t,x)- V(x+\gamma(t))\to 0 \mbox{ as } t\to\infty,
 \end{equation}
 where $\gamma: [0,\infty)\to [-h_0,h_0]$ is a continuous function,
 $V$ is an evenly decreasing positive solution of \eqref{ellip_1}.
\end{thm}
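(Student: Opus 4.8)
The plan is to adapt the zero-number (intersection-number) method of Du--Matano \cite{DM}, as used in \cite{DuLou}, to the present free boundary setting. First I would establish precompactness of the orbit. Since $u$ is bounded and $f\in C^1$, parabolic $L^p$ and Schauder estimates, applied after straightening the free boundaries by $y=(2x-g(t)-h(t))/(h(t)-g(t))$ in the finite case and by localizing on fixed compact sets in the unbounded case, yield uniform bounds on $u$ and its derivatives up to the moving boundary together with uniform H\"older bounds on $g,h\in C^{1+\mu}$. By Proposition \ref{prop:exist limits} the endpoints converge to $g_\infty,h_\infty$, and since the solution is time-global, Theorem \ref{thm:vanishing} excludes shrinking, so $h_\infty>g_\infty$. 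The uniform continuity of $g',h'$ forces $g'(t),h'(t)\to 0$ whenever $g_\infty,h_\infty$ are finite, whence $u_x(t,g(t))\to\alpha$ and $u_x(t,h(t))\to-\alpha$. Working with the time-translates $u(t+t_n,\cdot)$ and passing to the limit, I then build the $\omega$-limit set in an appropriate local topology.

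The core tool is the reflection/zero-number argument. For each $c\in\R$ I would set $w_c(t,x):=u(t,x)-u(t,2c-x)$ on the common part of the domain and track its number of sign changes, which is finite, nonincreasing in $t$, and strictly decreasing whenever $w_c(t,\cdot)$ develops a degenerate zero. I would use this to dispose of the mixed case of the dichotomy: if, say, $h_\infty=+\infty$ while $g_\infty\in\R$, then near the stabilized left boundary $u$ resembles a one-sided stationary profile with $u_x(t,g(t))\to\alpha$, whereas spreading on the right keeps $u$ bounded below on arbitrarily large sets; comparing $u$ with a suitable reflection then contradicts the monotonicity of the sign-change count. Hence $I_\infty$ is either a finite interval or all of $\R$.

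In the finite case the orbit lives essentially on the fixed interval $I_\infty$, and every element of the $\omega$-limit set is a nonnegative stationary solution of $v''+f(v)=0$ on $(g_\infty,h_\infty)$ vanishing at the endpoints with $v'(g_\infty)=\alpha$ and $v'(h_\infty)=-\alpha$, i.e.\ a solution of \eqref{ellip_0}; it is nontrivial because Theorem \ref{thm:vanishing} rules out vanishing for time-global solutions. A phase-plane analysis, using that the first integral $\tfrac12(v')^2+F(v)$ is constant and equals $\tfrac12\alpha^2$ at each endpoint, shows such a solution is the unique symmetric hump once the length $h_\infty-g_\infty=2\ell$ is fixed, namely $V_\alpha$ for some $\ell>0$. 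Uniqueness of the limit then upgrades subconvergence to convergence, giving $u(t,\cdot)\to V_\alpha$ locally uniformly in $(g_\infty,h_\infty)$.

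The main work is the case $I_\infty=\R$. On every compact set the boundaries are eventually irrelevant, so each element of $\omega(u_0)$ is an entire, bounded, nonnegative solution of $u_t=u_{xx}+f(u)$ on $\R\times\R$. Applying the reflection argument of \cite{DM} for every $c$, the control on the number of sign changes of $w_c$ dictates the symmetry and monotonicity of the limit profiles, forcing each $\omega$-limit element to be either a positive constant solving \eqref{ellip_1} or, after a translation, an even strictly decreasing positive ground state $V$. A continuity and uniqueness argument then produces a continuous drift $\gamma(t)\in[-h_0,h_0]$ with $u(t,\cdot)-V(\cdot+\gamma(t))\to 0$. The hardest part will be precisely this last step: showing that the two alternatives cannot mix along the orbit, and that the translation $\gamma(t)$ can be selected continuously with the stated bound and realizes the convergence; this is where the delicate interplay between the zero-number estimates and the structure of the set of entire solutions is essential.
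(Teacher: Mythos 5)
Your proposal follows essentially the same route as the paper, which itself only sketches the argument by reducing it to four claims established by the zero-number/reflection method of Du--Matano and Du--Lou: the $\omega$-limit set consists of stationary solutions, the finite-interval case forces $\omega(u)=\{V_\alpha(\cdot-g_\infty)\}$, and the whole-line case yields either a positive constant or a continuum of translates of an even ground state realized by a continuous drift $\gamma(t)$. The only place you work harder than necessary is the dichotomy: the mixed case ($g_\infty$ finite, $h_\infty=+\infty$) is excluded at once by Lemma \ref{lem:center}, which confines $g(t)+h(t)$ to $(-2h_0,2h_0)$, so no reflection argument is needed there.
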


\begin{remark}\label{V_infty>B}
The conclusion in \eqref{to V(gamma)} is possible only if \eqref{cond3} holds,
$\alpha \leq \alpha_0$ and if
\begin{equation}\label{V>B}
 V_\infty := \lim\limits_{x\to \infty} V(x) \geq B := \min\{ \bar{v}: \alpha^2 = 2F(\bar{v} )\}.
\end{equation}
\end{remark}

As applications, we study two typical types of nonlinearities: \vskip 6pt
\begin{center}
(f$_M$)\ \ monostable case, \ \ \ (f$_B$)\ \ bistable case.
\end{center}
In the monostable case (f$_M$), we assume that $f$ is $C^1$ and it
satisfies
\begin{equation}\label{mono}
f(0)=f(1)=0, \quad f'(0)>0, \quad  f'(1)<0,\quad (1-u)f(u) >0 \ \mbox{for } u>0, u\not= 1.
\end{equation}
One example is $f(u) =u(1-u)$. In the bistable case (f$_B$), we assume that $f$ is $C^1$ and it
satisfies
\begin{equation}\label{bi}
f(0)=f(\theta)= f(1)=0, \quad f(u) \left\{
\begin{array}{l}
<0 \ \ \mbox{in } (0,\theta),\\
>0\ \  \mbox{in } (\theta, 1),\\
< 0\ \ \mbox{in } (1,\infty)
\end{array} \right.
\end{equation}
for some $\theta\in (0,1)$,  $f'(0)<0$, $f'(1)<0$ and
\begin{equation}\label{unbalance}
F(1)= \int_0^1 f(s) ds >0.
\end{equation}
A typical example is $f(u) = u(u-\theta)(1-u)$ with $\theta \in (0, \frac{1}{2})$.
Note that when $f$ is of (f$_B$) type and when $\alpha >0$, the condition \eqref{V>B}
in Remark \ref{V_infty>B} is not satisfied for the unique ground state $V$, and so the
convergence in \eqref{to V(gamma)} does not occur.

Clearly  \eqref{cond1}, \eqref{cond2} and \eqref{cond3} are satisfied if $f$ is of
{\rm (f$_M$)}, or of {\rm (f$_B$)} type.
The next theorem gives a rather complete description for the asymptotic behavior
of the solutions of \eqref{p} with monostable or bistable type of nonlinearity.

\begin{thm}\label{thm:mono-bi}
 Assume that $f$ is of {\rm (f$_M$)}, or {\rm (f$_B$)} type and $0<\alpha <  \sqrt{2F(1)} $.
 Let $(u,g,h)$ be a solution of \eqref{p} on some maximal interval $[0,T_*)$. Then either

{\rm (i) Spreading:} $T_*=\infty$, $(g_\infty, h_\infty)=\R^1$ and
\[
\lim_{t\to\infty}u(t,x)=1 \mbox{ locally uniformly in $\R^1$},
\]
or

{\rm (ii) Vanishing:} $T_* <\infty$, $\lim_{t\to T_*}g(t) =
\lim_{t\to T_*} h(t)  \in [-h_0,h_0]$ and
\[
\lim_{t\to T_*}\max_{g(t)\leq x\leq h(t)} u(t,x)=0,
\]
or

{\rm (iii) Transition:} $T_* =\infty$, $h_\infty = g_\infty +2\ell$ and
\[
\lim_{t\to\infty} u(t,\cdot) = V_\alpha (x) \mbox{ locally uniformly in } (g_\infty, h_\infty),
\]
where $V_\alpha$ is the unique positive solution to \eqref{ellip_0},
\begin{equation}\label{def-ell}
\ell := \int_0^B \frac{dr}{\sqrt{\alpha^2 - 2 F(r) }}
\quad \mbox{with } B\in (0,1) \mbox{ given by } \alpha^2 = 2 F(B) .
\end{equation}

Moreover, if $u_0=\sigma \phi$ with $\phi\in \mathscr {X}(h_0)$,
then there exists $\sigma^* = \sigma^* (h_0, \phi) \in (0,\infty]$
such that vanishing happens when $ 0< \sigma < \sigma^*$,
spreading happens when $ \sigma > \sigma^*$, and transition happens
when $\sigma=\sigma^*$.
\end{thm}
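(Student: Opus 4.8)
The plan is to combine the equivalence in Theorem~\ref{thm:vanishing} with the general convergence Theorem~\ref{thm:convergence}, and then to exploit the comparison principle to organise the one-parameter family $u_0=\sigma\phi$. First I would dispose of the qualitative trichotomy (i)--(iii) for a fixed solution $(u,g,h)$. Theorem~\ref{thm:vanishing} gives the dichotomy: either $T_*<\infty$, in which case vanishing (equivalently shrinking) occurs and we are in case (ii) --- the fact that the collapse point $g_*=h_*$ lies in $[-h_0,h_0]$ follows from the free-boundary dynamics, since once $\|u\|_\infty$ is small the slopes $u_x$ at the two fronts are small and hence $g'\approx\alpha>0$, $h'\approx-\alpha<0$, so the fronts retract --- or $T_*=\infty$. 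In the latter case Theorem~\ref{thm:convergence} tells us $I_\infty$ is a finite interval or all of $\R^1$. If $I_\infty$ is finite, then $u(t,\cdot)\to V_\alpha$ with $h_\infty=g_\infty+2\ell$, which is exactly case (iii); the value of $\ell$ in \eqref{def-ell} comes from the first integral $\tfrac12(V_\alpha')^2+F(V_\alpha)\equiv F(B)$ for \eqref{ellip_0}, evaluated at a front (where $V_\alpha=0$, $|V_\alpha'|=\alpha$) to give $\alpha^2=2F(B)$, and then integrating $dx=dV_\alpha/\sqrt{\alpha^2-2F(V_\alpha)}$ from the centre to a front.

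Next I would treat the case $I_\infty=\R^1$. By Theorem~\ref{thm:convergence} either $u$ tends to a positive constant zero of $f$, or \eqref{to V(gamma)} holds. The latter is excluded by Remark~\ref{V_infty>B}: in the bistable case this is the observation already recorded after \eqref{unbalance} (the unique ground state has $V_\infty=0<B$), while in the monostable case no evenly decreasing positive solution of \eqref{ellip_1} can satisfy \eqref{V>B} --- the only positive zero of $f$ available as a limit $V_\infty$ is $1$, but the first integral $\tfrac12(V')^2+F(V)\equiv F(V_\infty)$ together with the strict monotonicity of $F$ beyond $1$ rules out such a solution, while a limit $V_\infty=0<B$ violates \eqref{V>B} directly. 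Hence $u(t,\cdot)\to c$ with $f(c)=0$, $c>0$. For (f$_M$) the only such $c$ is $1$. For (f$_B$) the candidates are $\theta$ and $1$, and I would exclude $c=\theta$ using the instability of $\theta$ (here $f'(\theta)>0$): a solution converging uniformly on compacts to $\theta$ on the whole line is ruled out by a comparison/zero-number argument, since $u$ would eventually exceed $\theta$ on an arbitrarily long interval and then be driven up to $1$. This yields case (i), and the three cases are mutually exclusive and exhaustive.

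Finally, for the sharp threshold with $u_0=\sigma\phi$, I would invoke the comparison principle, which gives strict monotonicity of $(u^\sigma,g^\sigma,h^\sigma)$ in $\sigma$. Vanishing is a downward-closed and open condition in $\sigma$ (openness by continuous dependence, as finite-time shrinking persists under small perturbations), and it holds for all small $\sigma$ (small data keep $u$ small, so the fronts retract and the interval shrinks; cf.\ Proposition~\ref{prop:vanish cond}); spreading is upward-closed and open. Setting $\sigma^*:=\sup\{\sigma>0: u^\sigma \text{ vanishes}\}\in(0,\infty]$, vanishing then holds exactly on $(0,\sigma^*)$ and spreading on some $(\sigma^{**},\infty)$. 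It remains to show $\sigma^*=\sigma^{**}$, i.e.\ that transition occupies a single value. For this I would use the rigidity of the transition state: by case (iii) every transition solution converges to a translate of the \emph{same} profile $V_\alpha$, whose width $2\ell$ is fixed by $\alpha$ alone; if two distinct $\sigma_1<\sigma_2$ both produced transition, the strict ordering $u^{\sigma_1}<u^{\sigma_2}$ would force two ordered translates of $V_\alpha$ on nested intervals, contradicting the strong maximum principle. Hence transition happens precisely at $\sigma=\sigma^*$.

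The main obstacle will be this last sharpness step, together with the exclusion of convergence to $\theta$: both require that the threshold state $V_\alpha$ (respectively the constant $\theta$) be nondegenerate and unstable, so that the corresponding behaviour cannot persist on an interval of initial data. The comparison principle delivers monotonicity for free, but upgrading it to strict single-point thresholds --- and verifying that spreading/vanishing are genuinely open conditions --- is where the real work lies.
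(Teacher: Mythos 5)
Your overall strategy coincides with the paper's: the trichotomy is read off from Theorems \ref{thm:vanishing} and \ref{thm:convergence} (with the collapse point located via Lemma \ref{lem:center}, which is cleaner than your heuristic about the front speeds), and the threshold is defined as $\sigma^*:=\sup\{\sigma:\ u(\cdot,\cdot;\sigma\phi)\mbox{ vanishes}\}$, with vanishing and spreading each shown to be open conditions at $\sigma=\sigma^*$ by continuous dependence together with the explicit sub/supersolutions ($\tilde\phi$ and $V_\alpha$). Up to that point your proposal is sound and matches the paper.

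The genuine gap is in your final sharpness step. You argue that if $\sigma_1<\sigma_2$ both gave transition, the strict ordering $u^{\sigma_1}<u^{\sigma_2}$ would produce ``two ordered translates of $V_\alpha$ on nested intervals, contradicting the strong maximum principle.'' This does not yield a contradiction: both limiting supports have the same length $2\ell$, so nesting forces them to \emph{coincide}, and then both solutions converge to the \emph{same} profile $V_\alpha(\cdot-g_\infty)$. Two strictly ordered solutions converging to a common limit violates nothing --- the strong maximum principle gives strict ordering at each finite time, which is perfectly compatible with equal $\omega$-limits. The paper closes this gap with a spatial-shift argument: since $u^*(1,x+\epsilon)<u(1,x)$ for \emph{all} $\epsilon\in[0,\epsilon_0]$, comparison applied to the shifted solution $(u^*(t+1,x+\epsilon),g^*(t+1)-\epsilon,h^*(t+1)-\epsilon)$ forces $V_\alpha(x-b^*+\epsilon)\leq V_\alpha(x-b)$ for every $\epsilon\in(0,\epsilon_0]$, hence $b^*-\epsilon=b$ for a continuum of $\epsilon$, which is absurd; therefore the larger solution cannot undergo transition and must spread. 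You need this (or an equivalent rigidity device, e.g.\ an intersection-number argument) --- mere monotonicity in $\sigma$ plus uniqueness of the profile $V_\alpha$ up to translation is not enough to pin the transition set down to a single point. A secondary loose end: your exclusion of $u\to\theta$ in the bistable case asserts that $u$ ``would eventually exceed $\theta$ on an arbitrarily long interval,'' but local uniform convergence to $\theta$ could occur from below, so the instability argument needs to be formulated more carefully (the paper sidesteps this by citing the convergence framework of \cite{DM, DuLou}).
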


Theorem \ref{thm:mono-bi} is an analogue of Theorem 1.3 in \cite{DM} (for Cauchy problems) and
Theorems 1.2  and 1.3 in \cite{DuLou} (for \eqref{p} with $\alpha =0$), but
 they are different. First, Theorem \ref{thm:mono-bi} (ii)
 (together with Theorem \ref{thm:vanishing}) means that
 shrinking is possible, and shrinking happens at the same time as
 vanishing. Second, transition in Theorem \ref{thm:mono-bi} (iii) means that $u$
 converges to a stationary solution with compact support.
 From ecological point of view, this means that a species can survive
 forever in a bounded area without changing its population density.
 On the other hand, for the problems studied in \cite{DM, DuLou}, transition
 does not happen in the problems with monostable $f$, and it does happen
 in the problems with bistable $f$ but $u$ converges to the ground state defined on the whole space.
In \cite{Cai}, using a different approach the author also studied problem
\eqref{p} with $f(u)=u(1-u)$ and obtained similar results as in Theorem \ref{thm:mono-bi}.

In \cite{DM, DuLou}, the authors also studied the equation with combustion type of
nonlinearity. From a mathematical point of view, one of course can study the problem
\eqref{p} with combustion type of $f$. We remark that similar conclusions as in
Theorem \ref{thm:mono-bi} hold for this kind of $f$ (we omit the details in this paper).

Finally we remark that when spreading happens (Theorem \ref{thm:mono-bi} (i)),
the asymptotic spreading speed can be studied in the same way as in
\cite{DuLin, DuLou, DMZ}, etc. Indeed, the spreading speed is determined by the following problem
\begin{equation}\label{prop-profile}
\left\{
  \begin{array}{l}
  q_{zz} - c q_z + f(q) =0\ \ \mbox{ for }  z\in (0,\infty),\\
  q(0)=0, \; q_z(0) = c+\alpha,\; q(\infty)=1,\; q(z)>0 \mbox{ for } z>0.
  \end{array}
  \right.
\end{equation}

\begin{prop}\label{prop:speed} Assume that $f$ is of {\rm (f$_M$)}, or {\rm
(f$_B$)} type. If $ 0< \alpha < \sqrt{2F(1)}$, then \eqref{prop-profile} has a
unique solution $(c,q)=(c^*, q^*)$ with $c^*>0$.  Moreover, when spreading happens, we have
\[
 \begin{array}{c}
\lim_{t\to\infty} [h(t) -c^* t -H] = 0,\quad \lim\limits_{t\to \infty} h'(t)=c^*,\\
\lim_{t\to\infty} [g(t) +c^* t -G] = 0,\quad \lim\limits_{t\to \infty} g'(t)= -c^*
\end{array}
\]
for some $H,G\in \R$.
\end{prop}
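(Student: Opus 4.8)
The plan is to establish the two assertions in turn: first the existence and uniqueness of the semi-wave $(c^*,q^*)$, and then the sharp asymptotics of the free boundaries under the spreading assumption.

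\emph{The semi-wave.} I would pass to the phase plane of $q_{zz}-cq_z+f(q)=0$. Writing $p=q_z$ gives the system $q_z=p,\ p_z=cp-f(q)$. Since $f'(1)<0$, the state $(1,0)$ is a saddle for every $c\ge 0$, whose one-dimensional stable manifold, on the branch entering $\{q<1,\,p>0\}$, behaves near $(1,0)$ like $p\approx\lambda_-(c)(q-1)$ with $\lambda_-(c)=\tfrac12\bigl(c-\sqrt{c^2-4f'(1)}\bigr)<0$. Following this branch backward in $z$, I define the shooting value $P(c)$ to be the slope $p$ at the first hit of $\{q=0\}$ (when it occurs with $p>0$), fixing the translation by $q(0)=0$. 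The boundary constraint $q_z(0)=c+\alpha$ is then exactly the scalar equation $P(c)=c+\alpha$.

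Two facts close this step. Multiplying the profile equation by $q_z$ yields $\frac{d}{dz}\bigl(\tfrac12 p^2+F(q)\bigr)=cp^2\ge 0$; at $c=0$ this energy is conserved and equals $F(1)$ at $(1,0)$, so $\tfrac12 P(0)^2=F(1)$, i.e. $P(0)=\sqrt{2F(1)}$, and the sign pattern of $F$ on $[0,1]$ in both the (f$_M$) and (f$_B$) cases guarantees that the backward orbit really reaches $q=0$ with $p>0$. Next, $P$ is strictly decreasing in $c$: writing the orbit as $p=p(q)$ with $dp/dq=c-f(q)/p$ and comparing $c_1<c_2$, the $c_2$-orbit starts below the $c_1$-orbit near $q=1$ (because $|\lambda_-(c)|$ decreases in $c$) and can never cross it, since at any putative crossing the difference of slopes equals $c_1-c_2<0$. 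Finally $P(c)\downarrow 0$ as $c$ rises to the speed of the monotone front joining $0$ and $1$ (the heteroclinic orbit to $(0,0)$, which exists in both cases because $F(1)>0$). Hence $G(c):=P(c)-c-\alpha$ is continuous, strictly decreasing, with $G(0)=\sqrt{2F(1)}-\alpha>0$ and $G<0$ near the full-front speed; it has a unique zero $c^*>0$, and then $q^*=q(\cdot\,;c^*)$ is uniquely determined by $q(0)=0$.

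\emph{Front asymptotics.} Assume spreading, so $h(t),-g(t)\to\infty$ and $u\to 1$ locally uniformly. The relation $P(c)=c+\alpha$ shows that $c^*$ depends continuously and monotonically on $\alpha$; perturbing $\alpha$ slightly downward (resp. upward) produces semi-waves of speed $c^*+\varepsilon$ (resp. $c^*-\varepsilon$) that serve as super- (resp. sub-) solutions, after the usual adjustments to dominate (resp. be dominated by) $u$ at a large initial time and to verify the one-sided free-boundary inequality $\bar h'\ge -\bar u_x-\alpha$ (resp. the reverse). Comparison then yields $(c^*-\varepsilon)t-C'\le h(t)\le (c^*+\varepsilon)t+C$ for $t$ large, and letting $\varepsilon\to 0$ gives $h(t)/t\to c^*$; the same argument on the left gives $g(t)/t\to -c^*$. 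The delicate point, which I expect to be the main obstacle, is upgrading $h(t)\sim c^*t$ to the sharp statements $h(t)-c^*t\to H$ and $h'(t)\to c^*$. Here I would exploit the exponential approach $q^*(z)-1\sim Ae^{\lambda_-(c^*)z}$ forced by the saddle at $(1,0)$, and construct refined barriers of the form $q^*(c^*t+k-x\mp\xi(t))$ with shifts $\xi(t)=Me^{-\delta t}$; the exponential smallness of the resulting residuals makes these genuine super/subsolutions, and the convergence of $\xi$ traps $h(t)-c^*t$ between two convergent quantities, forcing it to a finite limit $H$. To pass from $h(t)-c^*t\to H$ to $h'(t)\to c^*$ I would invoke parabolic estimates up to the straightened free boundary to obtain uniform continuity of $h'$, whence a Barbalat-type argument gives $h'(t)-c^*\to 0$ and the free boundary condition $h'(t)=-u_x(t,h(t))-\alpha$ passes to the limit. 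The construction on the left boundary is symmetric and yields the corresponding statements for $g$.
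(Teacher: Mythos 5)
The paper gives no proof of this proposition --- it defers entirely to the semi-wave and sharp-barrier arguments of \cite{DMZ} --- and your outline follows exactly that route. Your construction of the semi-wave is essentially complete and correct: the phase-plane shooting from the saddle $(1,0)$, the energy identity $\frac{d}{dz}(\frac12 p^2+F(q))=cp^2$ giving $P(0)=\sqrt{2F(1)}$, the non-crossing argument for strict monotonicity of $P$, and the decay of $P$ to $0$ at the full-front speed reduce everything to the scalar equation $P(c)=c+\alpha$, with the hypothesis $0<\alpha<\sqrt{2F(1)}$ entering precisely where it should. (Two small points: pick a letter other than $G$ for $P(c)-c-\alpha$, since $G$ is already the limit constant for $g(t)$; and in the bistable case you should say explicitly why the backward orbit cannot exit through $\{p=0,\ 0<q<\theta\}$ before reaching $q=0$ when $c$ is below the bistable front speed --- this is where $F(1)>0$ is used.)

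The one genuine gap is in the refined barriers. For the pure translates $q^*(c^*t+k-x\mp\xi(t))$ with $\xi(t)=Me^{-\delta t}$, the PDE residual $-\xi'(t)q^*_z$ and the free-boundary inequalities do come out with the correct signs (using $q^*_z>0$ and $q^*_z(0)=c^*+\alpha$), but the required ordering at the initial comparison time cannot be arranged. Indeed $\sup q^*=1$ is not attained, while $u(T_0,\cdot)$ may exceed $1$ when $\|u_0\|_{L^\infty}>1$ (comparison with the ODE $\dot U=f(U)$ only forces $u\to 1$ from above), so no translate of $q^*$ dominates $u(T_0,\cdot)$; symmetrically, the subsolution lives on a set on which $q^*$ is arbitrarily close to $1$, whereas $u\to 1$ only locally uniformly, so $\underline u\le u$ fails far behind the front. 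The repair --- and what \cite{DMZ} actually does --- is to use $(1\pm Me^{-\delta t})\,q^*(\cdot)$ together with the shift, choosing $\delta$ small relative to $|f'(1)|$ so that the extra residual terms of order $Me^{-\delta t}$, which are concentrated where $q^*$ is near $1$ or near $0$, are absorbed by the sign of $f'(1)$ (and the boundedness of $f'$ near $0$). With that correction the trapping argument does yield $h(t)-c^*t\to H$, and your Barbalat-type deduction of $h'(t)\to c^*$ is fine once uniform $C^{1+\nu/2}$ bounds on $h$ are in hand from the a priori estimates of Section 2; alternatively, $h'\to c^*$ follows directly from $u_x(t,h(t))\to -q^*_z(0)$ and the free boundary condition.
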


We omit the proof of this proposition since it is similar as that in \cite{DMZ}.

The rest of the paper is organized as follows. In section 2, we
present some basic results which are fundamental for this research.
In section 3 we discuss the vanishing phenomena, and give necessary conditions
for vanishing and for shrinking. We also prove Theorem 1.2 at the end of this section.
In section 4 we prove Theorem 1.3. In section 5 we give some
sufficient conditions for vanishing. In section 6 we prove Theorem 1.5.

\section{Some Basic Results}\label{sec:basic}

In this section we give some basic results which will be
used later in the paper. The results here are for general $f$ which
satisfies \eqref{cond1}.

\subsection{Time-local existence}
The following local existence result can be proved by the same
argument as in \cite{DuLin}.

\begin{thm}
\label{thm:local} Suppose that \eqref{cond1} holds. For any given
$u_0\in \mathscr {X}(h_0)$ and any $\nu\in (0,1)$, there is a
$T>0$ such that Problem \eqref{p} admits a unique solution
$$(u, g, h)\in C^{(1+\nu)/2, 1+\nu}(\overline{G}_{T})\times C^{1+\nu/2}([0,T])\times C^{1+\nu/2}([0,T]);$$
moreover,
\begin{eqnarray}
\|u\|_{C^{ (1+\nu)/2, 1+\nu}(\overline{G}_{T})}+\|g\|_{C^{1+\nu/2}([0,T])}+\|h\|_{C^{1+\nu/2}([0,T])}\leq C, 
\label{b12}
\end{eqnarray}
where $G_{T}=\{(t,x)\in \R^2: x\in [g(t), h(t)], t\in (0,T]\}$, $C$
and $T$ only depend on $h_0$, $\nu$ and $\|u_0\|_{C^{2}([-h_0,
h_0])}$.
\end{thm}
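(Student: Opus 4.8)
The plan is to follow the standard ``straightening the free boundary'' technique: introduce a change of variables that carries the moving domain $[g(t),h(t)]$ onto the fixed interval $[-h_0,h_0]$, convert \eqref{p} into a quasilinear parabolic problem coupled with two ordinary differential equations for $g$ and $h$, and then solve the resulting system by a contraction mapping argument in parabolic H\"older spaces. Concretely, I would set
$$ y = h_0\,\frac{2x - g(t) - h(t)}{h(t)-g(t)}, \qquad w(t,y) := u(t,x), $$
so that $y\in[-h_0,h_0]$ whenever $x\in[g(t),h(t)]$ and the Dirichlet conditions become $w(t,\pm h_0)=0$. Rewriting the equation in the new variables produces
$$ w_t = a(t)\, w_{yy} + b(t,y)\, w_y + f(w), \qquad -h_0<y<h_0, $$
with $a(t)=\bigl(2h_0/(h(t)-g(t))\bigr)^2$ and $b$ an explicit function of $g,h,g',h'$; as long as $h(t)-g(t)$ stays close to $2h_0$, the coefficient $a$ is bounded above and below by positive constants and the problem is uniformly parabolic.

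Observe that the parameter $\alpha$ enters only additively in the two free boundary conditions, so after transformation the equations for $g$ and $h$ read
$$ g'(t) = -\frac{2h_0}{h(t)-g(t)}\, w_y(t,-h_0) + \alpha, \qquad h'(t) = -\frac{2h_0}{h(t)-g(t)}\, w_y(t,h_0) - \alpha, $$
with $g(0)=-h_0$, $h(0)=h_0$. In particular the structure of the transformed problem is identical to that of the $\alpha=0$ case treated in \cite{DuLin}, and the additive constants $\pm\alpha$ have no effect on the regularity theory; this is why the same argument applies. Now fix $\nu\in(0,1)$ and, for small $T>0$, let $\Sigma_T$ be the closed bounded convex set of pairs $(g,h)\in C^{1+\nu/2}([0,T])\times C^{1+\nu/2}([0,T])$ matching the initial data, satisfying $h(t)-g(t)\geq h_0$, and obeying an a priori norm bound $M$. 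Given $(g,h)\in\Sigma_T$, I would first solve the quasilinear parabolic problem for $w$ on the fixed cylinder $Q_T=(0,T]\times(-h_0,h_0)$, obtaining a unique $w\in C^{(1+\nu)/2,1+\nu}(\overline{Q}_T)$ by parabolic Schauder theory; here the compatibility condition $u_0(\pm h_0)=0$ guaranteed by $u_0\in\mathscr{X}(h_0)$ ensures H\"older regularity up to $t=0$. Then define $\mathcal{T}(g,h)=(\tilde g,\tilde h)$ by integrating the two ordinary differential equations above, using the computed boundary derivatives $w_y(\cdot,\pm h_0)$.

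The main point is to verify that $\mathcal{T}$ maps $\Sigma_T$ into itself and is a contraction for $T$ small. Schauder estimates give a uniform bound $\|w\|_{C^{(1+\nu)/2,1+\nu}(\overline{Q}_T)}\leq C(h_0,\nu,\|u_0\|_{C^2([-h_0,h_0])})$ depending only on the admissible data, and in particular control the boundary traces $w_y(\cdot,\pm h_0)$ in $C^{\nu/2}([0,T])$; integrating in $t$ then yields $\tilde g,\tilde h\in C^{1+\nu/2}([0,T])$ with norms that can be forced below $M$ by shrinking $T$, while the positivity $u_0'(-h_0)>0$, $u_0'(h_0)<0$ keeps $\tilde h-\tilde g$ near $2h_0$, so that $\Sigma_T$ is preserved. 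I expect the contraction estimate to be the main obstacle: for two inputs $(g_1,h_1),(g_2,h_2)\in\Sigma_T$ one must show that the associated solutions $w_1,w_2$, and more importantly their boundary fluxes, differ by at most $C\,T^{\delta}$ times the distance of the inputs. This rests on H\"older estimates for $w_1-w_2$, which solves a linear parabolic equation whose coefficients and right-hand side differ by quantities proportional to $\|(g_1,h_1)-(g_2,h_2)\|$; the small-time gain comes from the vanishing of the data difference at $t=0$ combined with interpolation. Once the contraction is established, the Banach fixed point theorem produces a unique fixed point $(g,h)$, whose associated $w$ gives, after reversing the change of variables, the unique solution $(u,g,h)$ of \eqref{p} with the stated regularity, and the bound \eqref{b12} follows directly from the Schauder estimates.
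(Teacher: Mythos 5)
Your proposal is correct and follows essentially the same route as the paper, which proves this theorem simply by invoking the argument of \cite{DuLin}: straightening the free boundaries onto a fixed interval, reducing to a uniformly parabolic problem coupled with ODEs for $g$ and $h$, and closing with a contraction mapping in parabolic H\"older spaces, with the key observation (which you make explicitly) that the additive constants $\pm\alpha$ in the boundary conditions do not affect the regularity theory. The only cosmetic difference is your choice of a single global linear rescaling rather than the local boundary-straightening diffeomorphism used in \cite{DuLin}, which changes nothing essential.
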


\begin{remark}\rm
As in \cite{DuLin}, by the Schauder estimates applied to the
equivalent fixed boundary problem used in the proof, we have
additional regularity for $u$, namely,  $u\in C^{1+\nu/2, 2+\nu}(G_T)$.
\end{remark}

\subsection{Comparison principles}
\begin{lem}
\label{lem:comp1} Suppose that \eqref{cond1} holds, $T\in
(0,\infty)$, $\overline g, \overline h\in C^1([0,T])$, $\overline
u\in C(\overline D_T)\cap C^{1,2}(D_T)$ with $D_T=\{(t,x)\in\R^2:
0<t\leq T, \overline g(t)<x<\overline h(t)\}$, and
\begin{eqnarray*}
\left\{
\begin{array}{lll}
\overline u_{t} \geq \overline u_{xx} +f(\overline u),\; & 0<t \leq T,\
\overline g(t)<x<\overline h(t), \\
\overline u= 0,\quad \overline g'(t)\leq - \overline u_x + \alpha,\quad &
0<t \leq T, \ x=\overline g(t),\\
\overline u= 0,\quad \overline h'(t)\geq - \overline u_x -\alpha,\quad
&0<t \leq T, \ x=\overline h(t).
\end{array} \right.
\end{eqnarray*}
If $[-h_0, h_0]\subseteq [\overline g(0), \overline h(0)]$,
$u_0(x)\leq \overline u(0,x)$ in $[-h_0,h_0]$, and if
$(u,g, h)$ is a solution of \eqref{p}, then
\[
g(t)\geq \overline g(t),\; h(t)\leq\overline h(t) \mbox{ in } (0,
T], \quad u(x,t)\leq \overline u(x,t) \mbox{ for } t\in (0, T] \mbox{ and }
x\in (g(t), h(t)).
\]
\end{lem}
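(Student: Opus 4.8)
The plan is to preserve the ordering by a ``first contact time'' argument combined with Hopf's lemma, exactly in the spirit of the Stefan case $\alpha=0$ treated in \cite{DuLin}. The one point worth highlighting in advance is that the terms $\pm\alpha$ appear in the free boundary conditions of both the solution and the supersolution, and they cancel when one compares the two boundary velocities; consequently $\alpha>0$ introduces no new difficulty at the level of the comparison principle, and the computation reduces to the classical one. First I would reduce to the case of a strict initial inclusion, assuming temporarily that $[-h_0,h_0]\subset(\overline{g}(0),\overline{h}(0))$ and $u_0<\overline{u}(0,\cdot)$ on $(-h_0,h_0)$, and recover the general (possibly non-strict) statement at the end by applying the strict case to slightly enlarged supersolutions (e.g. replacing $\overline{g},\overline{h},\overline{u}$ by $\overline{g}-\ve,\ \overline{h}+\ve,\ \overline{u}+\ve$, suitably corrected so that the differential inequalities are retained) and letting $\ve\to 0$.

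On the common domain $\{(t,x):g(t)\le x\le h(t)\}$, which lies inside $[\overline{g}(t),\overline{h}(t)]$ as long as the boundaries have not crossed, I set $w:=\overline{u}-u$. Since $f\in C^1$ and $u,\overline{u}$ are bounded, the mean value theorem furnishes a bounded coefficient $c(t,x)$ with $f(\overline{u})-f(u)=c(t,x)\,w$, so subtracting the equation for $u$ from the differential inequality for $\overline{u}$ gives $w_t-w_{xx}-c(t,x)w\ge 0$; thus $w$ is a supersolution of a linear parabolic operator with bounded coefficient. At the lateral boundaries of the common domain one has $u=0$, while $\overline{u}>0$ there by the strong maximum principle (those points being interior to $[\overline{g},\overline{h}]$), so $w>0$ on the lateral boundary, and $w(0,\cdot)>0$ in the interior by the strict initial assumption.

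The heart of the proof is to let $T_0$ be the supremum of those $t\in(0,T]$ for which $\overline{g}(s)<g(s)$, $h(s)<\overline{h}(s)$ and $w(s,\cdot)>0$ on $(g(s),h(s))$ hold for all $s\in(0,t]$, and to show $T_0=T$. Suppose instead $T_0<T$. By continuity one of these strict inequalities degenerates to an equality at $t=T_0$. If the right boundaries meet, $x_0:=h(T_0)=\overline{h}(T_0)$, then $w\ge 0$ to the left of $x_0$ with $w(T_0,x_0)=0$, so Hopf's lemma yields $w_x(T_0,x_0)<0$; subtracting the two free boundary conditions at $x_0$, with the $\alpha$'s cancelling, gives $\overline{h}'(T_0)-h'(T_0)\ge -w_x(T_0,x_0)>0$, which contradicts the fact that $\overline{h}(t)-h(t)$ decreases to $0$ as $t\to T_0^-$. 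The left boundary is handled symmetrically (using $g'(T_0)-\overline{g}'(T_0)\ge w_x(T_0,x_0)>0$), and an interior contact $w(T_0,x^*)=0$ is excluded by the strong maximum principle, which would force $w\equiv 0$ on the whole strip up to $T_0$ and hence contradict $w>0$ on the lateral boundary. Therefore $T_0=T$, which delivers all three asserted inequalities on $(0,T]$.

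The step I expect to be the main obstacle is not the contact-time argument itself but the bookkeeping at $t=0$ when the initial intervals coincide, i.e. making the approximation rigorous: one must exhibit perturbed supersolutions that still satisfy the parabolic inequality together with the corrected boundary-velocity inequalities, and then verify that the limit $\ve\to 0$ preserves the non-strict inequalities $g\ge\overline{g}$, $h\le\overline{h}$, $u\le\overline{u}$. The interior and boundary estimates of Theorem \ref{thm:local} provide the compactness needed to pass to this limit.
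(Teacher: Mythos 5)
The paper does not actually write out a proof of this lemma: it states that the proof is identical to that of Lemma 5.7 in \cite{DuLin} (the case $\alpha=0$). Your argument reconstructs exactly that standard proof --- reduce to strict inequalities, track the first contact time $T_0$, and at a boundary contact apply Hopf's lemma to $w=\overline u-u$ to contradict the sign of $(\overline h-h)'(T_0)$ (resp.\ $(g-\overline g)'(T_0)$) --- and your key observation that the $\pm\alpha$ terms cancel when the two boundary-velocity conditions are subtracted is precisely the reason the paper can invoke the $\alpha=0$ proof verbatim. The contact-time and Hopf computations are correct as written.

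The one step I would change is the regularization, which you yourself flag as the main obstacle: perturbing the \emph{supersolution} to $(\overline g-\ve,\overline h+\ve,\overline u+\ve)$ is awkward, since adding $\ve$ destroys the zero boundary condition and, because $f(\overline u+\ve)\neq f(\overline u)$, also the differential inequality, so the ``suitable correction'' is not straightforward. The standard (and cleaner) route, used in \cite{DuLin}, is to perturb the \emph{solution} instead: replace $(h_0,u_0)$ by $h_0^\ve<h_0$ and $u_0^\ve\leq u_0$ with $u_0^\ve<\overline u(0,\cdot)$ strictly, so that the perturbed triple $(u^\ve,g^\ve,h^\ve)$ is an exact solution of \eqref{p} (no correction needed), run your first-contact argument to get $g^\ve\geq\overline g$, $h^\ve\leq\overline h$, $u^\ve\leq\overline u$, and then let $\ve\to 0$ using the continuous dependence on initial data furnished by the local existence theory (Theorem \ref{thm:local}). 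With that substitution your proof is complete and coincides with the one the paper cites.
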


\begin{lem}
\label{lem:comp2} Suppose that  \eqref{cond1} holds, $T\in
(0,\infty)$, $\overline g,\, \overline h\in C^1([0,T])$, $\overline
u\in C(\overline D_T)\cap C^{1,2}(D_T)$ with $D_T=\{(t,x)\in\R^2:
0<t\leq T, \overline g(t)<x<\overline h(t)\}$, and
\begin{eqnarray*}
\left\{
\begin{array}{lll}
\overline u_{t}\geq  \overline u_{xx}+ f(\overline u),\; &0<t \leq T,\ \overline g(t)<x<\overline h(t), \\
\overline u\geq u, &0<t \leq T, \ x= \overline g(t),\\
\overline u= 0,\quad \overline h'(t)\geq - \overline u_x -\alpha,\quad
&0<t \leq T, \ x=\overline h(t),
\end{array} \right.
\end{eqnarray*}
with $\overline g(t)\geq g(t)$ in $[0,T]$, $h_0\leq \overline
h(0)$, $u_0(x)\leq \overline u(0,x)$ in $[\overline g(0),h_0]$,
where  $(u,g, h)$ is a solution of \eqref{p}. Then
\[
\mbox{ $h(t)\leq\overline h(t)$ in $(0, T]$,\quad $u(x,t)\leq
\overline u(x,t)$ for $t\in (0, T]$\ and $ \overline g(t)<x< h(t)$.}
\]
\end{lem}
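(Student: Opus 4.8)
The plan is to follow the first-contact argument underlying the two-sided comparison of Lemma~\ref{lem:comp1} (with prototype in \cite{DuLin}), adapted to the present one-sided setting. Since the ordering of the left boundaries $\overline g(t)\ge g(t)$ is assumed, I only need to propagate simultaneously the ordering of the right free boundaries $h(t)\le\overline h(t)$ and the interior inequality $u\le\overline u$ on the overlap region $\{\overline g(t)<x<h(t)\}$. Setting $w:=\overline u-u$, I would first record its data: on $\{t=0\}$ one has $w(0,\cdot)=\overline u(0,\cdot)-u_0\ge 0$ on $[\overline g(0),h_0]$; on the left curve $x=\overline g(t)$ one has $w=\overline u-u\ge 0$; and on the right curve $x=h(t)$, where $u=0$, one has $w=\overline u(t,h(t))$. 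To control this last value I would apply the maximum principle to $\overline u$ on its own domain: writing $f(\overline u)=c(t,x)\overline u$ with $c$ bounded (as $f\in C^1$, $f(0)=0$ and $\overline u$ is bounded), $\overline u$ is a supersolution of a linear parabolic operator vanishing on $x=\overline h(t)$ with nonnegative remaining data, so $\overline u\ge 0$; the strong maximum principle then gives $\overline u>0$ in the interior of its domain (the alternative $\overline u\equiv 0$ being excluded by $\overline u\ge u>0$). Hence $w=\overline u(t,h(t))>0$ along $x=h(t)$ whenever $h(t)<\overline h(t)$.

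The core is a contradiction argument at the first breakdown time. Set
\[
\tau := \sup\Big\{ t\in(0,T] : h(s)\le\overline h(s)\ \text{and}\ u\le\overline u\ \text{on}\ [\overline g(s),h(s)]\ \text{for all}\ s\in(0,t] \Big\},
\]
which is positive by continuity and the initial data. To make the strict inequalities below available, I would first prove the strict version of the lemma, obtained by perturbing the supersolution (pushing $\overline h$ outward and raising $\overline u$ by $O(\varepsilon)$ so that all supersolution conditions and data orderings become strict), and recover the stated result by letting $\varepsilon\to 0$. In the strict problem, if the conclusion failed then $\tau<T$, and since $w\ge 0$ on the closed overlap up to $t=\tau$ the nonnegative $w$ attains the value $0$ at $t=\tau$; the left edge $x=\overline g(\tau)$ carries no zero (strictness), and by the previous paragraph a zero on the right edge $x=h(\tau)$ forces $h(\tau)=\overline h(\tau)$. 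Thus the zero occurs either (A) at the coincident right endpoints $h(\tau)=\overline h(\tau)$, or (B) at an interior point $x_0\in(\overline g(\tau),h(\tau))$.

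Alternative (B) is excluded by the strong maximum principle: $w\ge 0$ satisfies $w_t-w_{xx}-\tilde c\,w\ge 0$ with $\tilde c$ bounded (mean value theorem on $f$), so an interior zero at $(\tau,x_0)$ would propagate backward to force $w\equiv 0$ on the parabolic component below $\tau$, contradicting the strict positivity of $w$ on the lateral boundaries in the strictified problem.

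The main obstacle is alternative (A), where the free-boundary structure must enter. At $t=\tau$ the point $x_1:=h(\tau)=\overline h(\tau)$ is a common right endpoint with $w(\tau,x_1)=0$ while $w>0$ just inside, so Hopf's boundary-point lemma (domain lying to the left of $x_1$) gives a strictly negative outward normal derivative, $w_x(\tau,x_1)<0$, that is $\overline u_x(\tau,\overline h(\tau))<u_x(\tau,h(\tau))$. Feeding this into the free-boundary relations $h'(\tau)=-u_x(\tau,h(\tau))-\alpha$ (genuine solution) and $\overline h'(\tau)\ge-\overline u_x(\tau,\overline h(\tau))-\alpha$ (supersolution) yields $\overline h'(\tau)>h'(\tau)$. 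But $\overline h-h\ge 0$ on $[0,\tau]$ with equality at $\tau$ forces $(\overline h-h)'(\tau)\le 0$, i.e. $\overline h'(\tau)\le h'(\tau)$, a contradiction. Hence neither (A) nor (B) occurs, so $\tau=T$; letting $\varepsilon\to 0$ then gives $h\le\overline h$ on $(0,T]$ and $u\le\overline u$ on the overlap, as claimed.
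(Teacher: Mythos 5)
Your overall strategy --- reduce to a strict version, locate the first time $\tau$ at which the ordering fails, rule out an interior first contact by the strong maximum principle, and rule out a contact at the common right endpoint by Hopf's lemma combined with the two free-boundary relations $h'(\tau)=-u_x(\tau,h(\tau))-\alpha$ and $\overline h'(\tau)\ge -\overline u_x(\tau,\overline h(\tau))-\alpha$ --- is exactly the argument the paper invokes by pointing to Lemma 5.7 of \cite{DuLin}, and the core contradiction in your cases (A) and (B) is sound.

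The one step that does not go through as written is the strictification. You propose to perturb the \emph{supersolution}, ``pushing $\overline h$ outward and raising $\overline u$ by $O(\varepsilon)$,'' but there is no reason the perturbed triple is again a supersolution: adding a constant to $\overline u$ destroys the boundary condition $\overline u=0$ at $x=\overline h(t)$; multiplying by $1+\varepsilon$ need not preserve the differential inequality, since $(1+\varepsilon)f(v)\ne f((1+\varepsilon)v)$ in general; and extending $\overline u$ to the enlarged domain while keeping $\overline h'\ge-\overline u_x-\alpha$ on the displaced boundary requires a construction you have not supplied. The standard repair --- and the one actually used in \cite{DuLin} --- is to perturb the \emph{exact solution} instead: replace $h_0$ by $h_0(1-\varepsilon)$ and $u_0$ by a slightly smaller datum supported in $[-h_0(1-\varepsilon),h_0(1-\varepsilon)]$, so that the resulting solution $(u^\varepsilon,g^\varepsilon,h^\varepsilon)$ of \eqref{p} satisfies the strict orderings at $t=0$ against the \emph{unchanged} $\overline u$; your first-contact argument then yields $h^\varepsilon\le\overline h$ and $u^\varepsilon\le\overline u$, and continuous dependence of solutions of \eqref{p} on the data (available from the proof of Theorem \ref{thm:local}) lets you send $\varepsilon\to0$. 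A secondary, minor point: to get $\overline u>0$ on $x=h(t)$ you exclude $\overline u\equiv0$ by invoking $\overline u\ge u>0$, but the interior inequality $\overline u\ge u$ is precisely what is being proved; the correct exclusion is that $\overline u(0,\cdot)\equiv 0$ would contradict $\overline u(0,\cdot)\ge u_0>0$ on $(\overline g(0),h_0)$.
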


These comparison principles are the same as those in \cite{DuLou}.
The proof of Lemma \ref{lem:comp1} is identical to that of Lemma 5.7
in \cite{DuLin}, and a minor modification of this proof yields Lemma
\ref{lem:comp2}.

The function $\overline u$, or the triple
$(\overline u,\overline g,\overline h)$, in Lemmas \ref{lem:comp1}
and \ref{lem:comp2} is often called an {\it upper solution} of \eqref{p}.
A {\it lower solution} can be defined analogously by reversing all the
inequalities. We also have corresponding comparison results for
lower solutions in each case.

\subsection{A priori estimates for $h'$ and $g'$}

\begin{lem}
\label{lem:bound-general} Suppose that \eqref{cond1} holds,
$(u,g,h)$ is a solution of \eqref{p} defined for $t\in [0,T_0)$ for
some $T_0\in (0,\infty)$, and there exists $C_1>0$ such that
\[
0< u(t,x)\leq C_1 \mbox{ for } t\in [0,T_0) \mbox{ and } x\in (g(t),
h(t)).
\]
Then there exists $C_2$ depending on $C_1$ but independent of $T_0$
such that
\[
-\alpha < -g'(t), h'(t) \leq C_2\; \mbox{ for } t\in (0, T_0).
\]

Moreover, the solution can be extended to some  interval $(0, T)$
with $T>T_0$ as long as $\inf_{0<t<T_0} [h(t)-g(t)]>0$.
\end{lem}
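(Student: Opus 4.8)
The plan is to get the two lower bounds from the sign of the normal derivative of $u$ at the free boundaries, and the two upper bounds from a parabolic barrier constructed near each boundary; the extension statement then follows from the resulting uniform Lipschitz control on $g,h$ together with Theorem \ref{thm:local}.

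First I would establish the lower bounds. Since $u>0$ in $(g(t),h(t))$ while $u(t,g(t))=u(t,h(t))=0$, the strong maximum principle and the Hopf boundary lemma give $u_x(t,g(t))>0$ and $u_x(t,h(t))<0$ for each $t\in(0,T_0)$. Substituting into the free boundary conditions of \eqref{p} yields $g'(t)=-u_x(t,g(t))+\alpha<\alpha$ and $h'(t)=-u_x(t,h(t))-\alpha>-\alpha$, which are exactly $-\alpha<-g'(t)$ and $-\alpha<h'(t)$. The inequality $h'(t)>-\alpha$ (and symmetrically $g'(t)<\alpha$) is not just one of the claimed bounds: it is precisely what controls an otherwise uncontrolled term in the barrier below.

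Next, for the upper bound at the right boundary, fix a large $M>0$ (to be chosen) and set $\Omega_M:=\{(t,x):0\le t<T_0,\ h(t)-M^{-1}\le x\le h(t)\}$ and $w(t,x):=C_1\big[2M(h(t)-x)-M^2(h(t)-x)^2\big]$. One checks directly that $0\le w\le C_1$ on $\Omega_M$, that $w_x(t,h(t))=-2MC_1$, $w_{xx}=-2M^2C_1$, and $w_t=2C_1M\,h'(t)\,[1-M(h(t)-x)]$. Because $1-M(h(t)-x)\in(0,1)$ on $\Omega_M$ and $h'(t)>-\alpha$ from the first step, we get $w_t\ge -2C_1M\alpha$; since $f\in C^1$ by \eqref{cond1}, $|f(w)|\le L_1:=\max_{[0,C_1]}|f|$, so $w_t-w_{xx}-f(w)\ge 2C_1M^2-2C_1M\alpha-L_1\ge 0$ once $M$ is large, depending only on $C_1,\alpha,L_1$. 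Rather than invoking Lemma \ref{lem:comp2} (whose free-boundary inequality at $x=h(t)$ is the wrong way round here), I would compare on the fixed-shape region $\Omega_M$ by the ordinary parabolic maximum principle: for $v:=w-u$ the equation for $u$ and the inequality for $w$ give $v_t-v_{xx}\ge f(w)-f(u)=c(t,x)\,v$ with $c=f'(\xi)$ bounded, hence $v_t-v_{xx}-c\,v\ge0$; on the parabolic boundary of $\Omega_M$ one has $v=0$ on $x=h(t)$, $v=C_1-u\ge0$ on $x=h(t)-M^{-1}$, and $v=w(0,\cdot)-u_0\ge0$ on $t=0$ provided $M$ is also taken large enough (depending on $\|u_0'\|_\infty$, but not on $T_0$) that the barrier dominates the initial slope near $x=h_0$. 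Thus $u\le w$ on $\Omega_M$, and letting $x\uparrow h(t)$ in $u(t,x)\le w(t,x)$ (both vanishing at $h(t)$) gives $u_x(t,h(t))\ge w_x(t,h(t))=-2MC_1$, so $h'(t)=-u_x(t,h(t))-\alpha\le 2MC_1=:C_2$. The symmetric barrier near $x=g(t)$ yields $-g'(t)\le C_2$, with $C_2$ depending only on $C_1$ (and $f,\alpha,u_0$) and independent of $T_0$.

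Finally, for the extension claim, assume $\inf_{0<t<T_0}[h(t)-g(t)]>0$. The uniform bounds $|g'|,|h'|\le C_2+\alpha$ make $g,h$ uniformly Lipschitz, so they extend continuously to $t=T_0$ with $h(T_0)-g(T_0)>0$; interior Schauder estimates, using $u\le C_1$ on a domain that stays nondegenerate, give uniform bounds letting $u(T_0,\cdot)$ be defined as a $C^2$ function, positive on $(g(T_0),h(T_0))$, vanishing at the endpoints with $u_x(T_0,g(T_0))>0>u_x(T_0,h(T_0))$ by Hopf. Hence $u(T_0,\cdot)$, after a translation, lies in the class $\mathscr{X}$, and Theorem \ref{thm:local} applied with this data continues the solution to some $(0,T)$ with $T>T_0$. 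I expect the main obstacle to be the barrier step: the derivative $w_t$ carries the still-unbounded factor $h'(t)$, so the differential inequality cannot be closed before a lower bound on $h'$ is in hand, and the whole point of proving $h'(t)>-\alpha$ first is to replace that uncontrolled term by the fixed quantity $-2C_1M\alpha$, which the positive second-order term $2C_1M^2$ then dominates for $M$ large.
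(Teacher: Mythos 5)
Your proof follows the paper's argument essentially step for step: Hopf's lemma and the free boundary conditions for the lower bounds, the same quadratic barrier $C_1\big[2M(h(t)-x)-M^2(h(t)-x)^2\big]$ on the same region (including the key observation that $h'(t)>-\alpha$ must be proved first so that the $h'(t)$ factor in $w_t$ is controlled), and the same a priori $C^2$ bound plus restart of the local existence theorem for the extension. The only divergence is that the paper invokes Lemma \ref{lem:comp2} where you apply the ordinary parabolic maximum principle on the comparison region; since $u$ and the barrier both vanish on $x=h(t)$ and the barrier dominates $u$ on the rest of the parabolic boundary, your reading is the cleaner formulation of what that step actually uses.
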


\begin{proof}
We only give the estimates for $h'$, the estimates for $g'$ is proved similarly.

By the maximum principle and Hopf lemma we have $u_x(t,h(t))< 0$, and so $h'(t)=-u_x(t,h(t))-\alpha>-\alpha$.
Next we give the upper bound of $h'$. Following the proof of Lemma 2.2 in \cite{DuLin}
we construct a function of the form
\begin{equation}\label{def-U}
U(t,x)=C_1 \big[2M(h(t)-x)-M^2 (h(t)-x)^2\big]
\end{equation}
over the region
\[
Q:= \{(t,x):0<t<T_0, \ \max\{ h(t)-M^{-1}, g(t)\} <x<h(t)\},
\]
where
$$
M:= \max\left\{ \frac{\alpha +\sqrt{\alpha^2 +2K_1}}{2}, \frac{4\|u_0\|_{C^1([-h_0,h_0])}}{3C_1} \right\}
$$
and $K_1 := \max_{0\leq u\leq C_1} |f'(u)|$.

Clearly $0\leq U\leq C_1$ in $Q$. By the definitions of $U, M$ and $K_1$ we have
$$
U_t - U_{xx} -f(U) \geq C_1 [2 M^2 - 2 \alpha M - K_1] \geq 0\quad \mbox{in }  Q.
$$
Moreover,
$$
U(t,h(t))= u(t,h(t))=0 \mbox{ for } t\in (0,T_0),
$$
$$
u_0 (x) \leq U(0,x)  \mbox{ for } x\in [h_0 -M^{-1}, h_0] \cap [-h_0, h_0],
$$
$$
U (t, h(t)- {M}^{-1} ) =C_1 \geq u (t, h(t)- M^{-1} )\quad  \mbox{ when }
h(t)-g(t) \geq  M^{-1}
$$
and
$$
U(t, g(t)) > 0 = u (t, g(t))\quad  \mbox{ when } h(t)-g(t) <  M^{-1}.
$$
Therefore, $u(t,x)\leq U(t,x)$ in $Q$ by the comparison principle Lemma \ref{lem:comp2}. Thus
$$
h'(t)=-u_x(t,h(t))-\alpha \leq - U_x(t,h(t))-\alpha \leq C_2 := 2MC_1 -\alpha.
$$

Now we assume $\rho := \inf_{0<t<T_0} [h(t)-g(t)]>0$ and to prove that
the solution $(u,g,h)$ can be extended to some interval $(0, T)$
with $T>T_0$.  From the above estimates we have
\[
  -g(t), h(t)\in [h_0 -\alpha t, h_0+C_2t],\quad -g'(t), h'(t)\in (-\alpha , C_2]\quad
  \mbox{ for } t\in [0, T_0).
\]
We now fix $\delta\in (0, T_0)$. By standard $L^p$ estimates, the
Sobolev embedding theorem, and the H\"{o}lder estimates for
parabolic equations, we can find $C_3>0$ depending only on
$\delta$, $T_0$, $C_1$, $C_2$ such that $||u(t,\cdot)||_{C^{2}([g(t), h(t)])}\leq
C_3$ for $t\in [\delta, T_0)$. It then follows from the proof of
Theorem~\ref{thm:local} (cf. \cite{DuLin}) that there exists a $\tau >0$ depending on
$C_3$, $C_2$, $C_1$ and $\rho$ but not on $t$ such that the solution of problem
\eqref{p} with initial time $t\in [\delta, T_0)$ can be extended uniquely to the time
$t+ 2\tau $. In particular, if we start from time $T_0-\tau$, then we
can extend the solution to time $T_0+\tau $.
\end{proof}

This lemma implies that the solution of \eqref{p} can be
extended as long as $u$ remains bounded, $u\not\equiv 0$ and $h(t)-g(t)>0$.
So we have the following theorem.

\begin{thm}\label{thm:global} Suppose that \eqref{cond1} holds.
Then the problem \eqref{p} has a unique solution defined on some maximal interval $[0, T_*)$
with $T_*\in (0,\infty]$.

Assume further that \eqref{cond2} holds. Then $T_*<\infty$ if and only if
shrinking or vanishing happens as $t\to T_*$.
\end{thm}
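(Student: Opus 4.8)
The plan is to treat the two assertions separately, building both on the local theory of Theorem~\ref{thm:local} and on the continuation mechanism behind Lemma~\ref{lem:bound-general}. For the existence and uniqueness statement I would start from Theorem~\ref{thm:local}, which produces a unique solution on a short interval $[0,T]$ whose length is controlled by $h_0$, $\nu$ and $\|u_0\|_{C^2([-h_0,h_0])}$. Local uniqueness lets me glue any two solutions on the overlap of their intervals, so setting $T_*$ to be the supremum of all $T>0$ for which \eqref{p} has a solution on $[0,T]$ yields a single solution on $[0,T_*)$ that cannot be continued past $T_*$; by construction $T_*\in(0,\infty]$. This disposes of the first assertion.

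For the equivalence I would first use \eqref{cond2} to exclude blow-up. Writing $w:=u e^{-Kt}$ and using $f(u)\le Ku$, one checks that $w_t-w_{xx}=(f(u)-Ku)e^{-Kt}\le 0$, so $w$ is a subsolution of the heat equation on $G_T$; since $w=0$ on the two free boundaries and $w(0,\cdot)=u_0$, the maximum principle gives $u(t,x)\le\|u_0\|_{L^\infty}e^{Kt}$. Hence $u$ stays bounded on every finite time interval, and by the strong maximum principle (using $u_0\in\mathscr{X}(h_0)$) we have $u>0$ in the interior for each $t\in(0,T_*)$.

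Now suppose $T_*<\infty$. Then on $[0,T_*)$ we have $0<u\le C_1:=\|u_0\|_{L^\infty}e^{KT_*}$, and the a priori bounds $-\alpha<-g'(t),h'(t)\le C_2$ from Lemma~\ref{lem:bound-general} keep $g$ and $h$ bounded there, so the limits $g_*,h_*$ of Proposition~\ref{prop:exist limits} are finite. If $\inf_{0<t<T_*}[h(t)-g(t)]>0$, the extension part of Lemma~\ref{lem:bound-general} would continue the solution strictly beyond $T_*$, contradicting maximality; hence $\inf_{0<t<T_*}[h(t)-g(t)]=0$. Choosing $t_n$ with $h(t_n)-g(t_n)\to 0$ and passing to a subsequence $t_n\to\bar t\in[0,T_*]$, the positivity and continuity of $h-g$ on $[0,T_*)$ force $\bar t=T_*$, and then the existence of the limits gives $h_*=g_*$. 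Thus shrinking occurs as $t\to T_*$, which in particular means that shrinking or vanishing happens, proving $T_*<\infty\Rightarrow(\text{shrinking or vanishing})$.

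For the converse, the remark following Lemma~\ref{lem:bound-general} shows that, with $u$ bounded by \eqref{cond2}, the continuation can stop only when $h-g\to 0$ (shrinking) or $u\to 0$ (vanishing); by the convention fixed before the theorem, whenever one of these occurs at a finite time $\widetilde T$ the maximal interval is $[0,\widetilde T)$ and $T_*<\infty$. The step I expect to be the main obstacle is not this finite-time bookkeeping but excluding the possibility that a global solution ($T_*=\infty$) nonetheless shrinks or vanishes as $t\to\infty$: the continuation estimate of Lemma~\ref{lem:bound-general} does not by itself forbid $\inf_{t>0}[h-g]=0$ being approached only at infinity, so closing this gap requires the sharper quantitative control of the boundary speeds developed in Section~3 (culminating in Theorem~\ref{thm:vanishing}). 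In the forward direction the analogous delicate point is converting $\inf_{0<t<T_*}[h-g]=0$ into the genuine collapse $h_*=g_*$ of the whole interval to a point, which is precisely where the existence of the boundary limits in Proposition~\ref{prop:exist limits} is indispensable.
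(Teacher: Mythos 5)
Your proposal is correct and follows essentially the same route as the paper: the maximal solution comes from Theorem~\ref{thm:local} plus uniqueness and gluing, and the implication $T_*<\infty\Rightarrow$ shrinking comes from the bound $u\le\|u_0\|_{L^\infty}e^{Kt}$ supplied by \eqref{cond2}, the a priori bounds and extension criterion of Lemma~\ref{lem:bound-general}, and the existence of the limits $g_*,h_*$ from Proposition~\ref{prop:exist limits}. The converse implication that you flag as needing more than Lemma~\ref{lem:bound-general} (a global solution could a priori shrink or vanish only as $t\to\infty$) is a real issue, but the paper handles it exactly as you anticipate: it is only genuinely closed by Lemma~\ref{lem:s->v} and Lemma~\ref{lem:T_*<infty} when Theorem~\ref{thm:vanishing} is assembled at the end of Section~3, so your deferral matches the paper's own logical structure.
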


\subsection{Stationary solutions}

In this subsection we study stationary solutions of $u_t=u_{xx} +f(u)$,
More precisely, consider the following problem:
\begin{equation}\label{ellip-p}
\left\{
\begin{array}{l}
v^{\prime\prime}+f(v)=0,\quad x>0,\\
v(0)=0,\quad v'(0) =\alpha,
\end{array}
\right.
\end{equation}
where $\alpha >0$ is the constant in \eqref{p}.
Multiplying the equation by $2v'$ and integrating it on $[0,x]$ we have
$$
(v')^2 = \alpha^2 - 2F(v),\quad x>0,
$$
where $F(v):= \int_0^v f(s) ds$. Thus
$$
\int_0^v \frac{dr}{\sqrt{\alpha^2 - 2F(r)}} =x,\ \quad x>0.
$$

We want to classify all the solutions of \eqref{ellip-p}.
When \eqref{cond3} holds, we define $\alpha_0>0$ by
\begin{equation}\label{def-alpha_0}
\alpha^2_0 := 2  \sup_{v>0} F(v) .
\end{equation}
Clearly,
$$
S:= \{v>0: \alpha^2 = 2F(v)\}
 \left\{
  \begin{array}{ll}
   \not= \emptyset ,& \mbox{if } \alpha <\alpha_0;\\
   \not= \emptyset, & \mbox{if } \alpha=\alpha_0\mbox{ and } \alpha_0^2  = 2F(\bar{v})
     \mbox{ for some } \bar{v}>0;\\
    =\emptyset, &  \mbox{if } \alpha > \alpha_0;\\
    =\emptyset, & \mbox{if } \alpha =\alpha_0 \mbox { and } \alpha^2_0 > 2F(v)
    \mbox{ for all } v>0,\\
    =\emptyset, & \mbox{if } \eqref{cond3} \mbox{ does not hold}.
  \end{array}
  \right.
$$
Set
\begin{equation}\label{def-ell-1}
B := \left\{
 \begin{array}{ll}
 \min S, & \mbox{when } S \not=\emptyset, \\
 \infty, & \mbox{when } S=\emptyset,
  \end{array}
 \right.
\quad \mbox{ and } \quad
\ell := \int_0^B \frac{dr} { \sqrt{\alpha^2 - 2F(r)}} .
\end{equation}
By a simple analysis for all of these cases we have the following result.

\begin{lem}\label{lem:stationary-alpha}
When \eqref{cond3} holds, the solutions of \eqref{ellip-p} are divided into the following cases.
\begin{itemize}
 \item[(i)] $\alpha< \alpha_0$ and $\ell < \infty$ holds. In this case the unique
 solution (denoted by $V_\alpha(x)$) of \eqref{ellip-p} is defined on $[0, 2 \ell]$,
 $V'_\alpha (x)>0$ in $[0, \ell)$, $V_\alpha (\ell) =B$ and $V_\alpha$ is symmetric
 with respect to $x= \ell$;
 \item[(ii)] $\alpha < \alpha_0$ and $\ell = \infty$, or
 $\alpha^2 =\alpha_0^2  = 2F(\bar{v})$ for some $\bar{v}$ (denote the minimum of
     such $\bar{v}$ by $B$). In these cases the unique solution (denoted by
     $\widetilde{V}_\alpha(x)$) of \eqref{ellip-p}  is defined in $[0,\infty)$,
 $\widetilde{V}'_\alpha (x)>0$ and $\widetilde{V}_\alpha (x)\to B$ as $x\to \infty$;
 \item[(iii)] $\alpha > \alpha_0$, or $\alpha^2 =\alpha^2_0 > 2F(v)$ for all $v>0$.
 In these cases the unique solution (denoted by $\widehat{V}_\alpha(x)$) of \eqref{ellip-p}
 is defined in $[0, \ell)$, $\widehat{V}'_\alpha (x)>0$ and $\widehat{V}_\alpha (x)
 \to \infty$ as $x\to \ell$;
\end{itemize}

When \eqref{cond3} does not hold, the solution is like $\widehat{V}_\alpha(x)$ in case {\rm (iii)}.
\end{lem}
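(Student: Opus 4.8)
The plan is to integrate the autonomous equation through its first integral and read off the geometry of the orbit from the sign and the zeros of $\psi(r):=\alpha^2-2F(r)$. Since $f\in C^1$, the initial value problem \eqref{ellip-p} has a unique maximal solution by the Picard--Lindel\"of theorem, so the entire content of the lemma is to describe the domain of this solution and the shape of its graph. Starting from the relation already derived in the text, namely $(v')^2=\psi(v)$ together with $x=\int_0^{v(x)}\psi(r)^{-1/2}\,dr$ (valid while $v$ is increasing), I note that $v'(0)=\alpha>0$ forces the solution to increase near $x=0$ and that $v'=+\sqrt{\psi(v)}>0$ persists exactly while $v(x)<B$, where $B=\min S$ (or $B=\infty$ if $S=\emptyset$). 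Indeed $\psi(r)>0$ for every $r\in[0,B)$ by the definition of $B$ and $\psi(0)=\alpha^2>0$; moreover, when $B<\infty$ one has $F(B)=\max_{[0,B]}F$, so $f(B)=F'(B)\ge 0$.

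The decisive point, and the step I expect to be the main obstacle, is to decide whether the increasing orbit reaches the level $v=B$ at a finite $x=\ell$ or only as $x\to\infty$; equivalently, whether the improper integral $\ell=\int_0^B\psi(r)^{-1/2}\,dr$ of \eqref{def-ell-1} converges. This is governed by the order of vanishing of $\psi$ at $r=B$, and here the hypothesis $f\in C^1$ is essential. Since $\psi'(r)=-2f(r)$, if $f(B)>0$ then $\psi(r)\sim 2f(B)(B-r)$ as $r\uparrow B$, so the integrand has an integrable square--root singularity and $\ell<\infty$. If instead $f(B)=0$ then, $f$ being $C^1$, one has $f(r)=f'(B)(r-B)+o(r-B)$ with $f'(B)\le 0$ (otherwise $B$ would be a local minimum of $F$, contradicting $F(r)<F(B)$ for $r<B$), whence $\psi(r)=O\big((B-r)^2\big)$ and the integral diverges at least logarithmically, so $\ell=\infty$. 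Recording this dichotomy: for $B<\infty$ we have $\ell<\infty$ if and only if $f(B)>0$.

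With the dichotomy in hand the three cases follow. In case (i) ($B<\infty$, $\ell<\infty$) necessarily $f(B)>0$, so $v$ reaches $B$ at $x=\ell$ with $v'(\ell)=0$ and $v''(\ell)=-f(B)<0$, a genuine maximum. As the equation is invariant under $x\mapsto 2\ell-x$, the reflection $w(x):=v(2\ell-x)$ solves the same equation with $w(\ell)=B$ and $w'(\ell)=0$; ODE uniqueness gives $v\equiv w$, so $V_\alpha$ extends to $[0,2\ell]$, is symmetric about $x=\ell$, and satisfies $V_\alpha(2\ell)=0$, $V_\alpha'(2\ell)=-\alpha$. In case (ii) one has $\ell=\infty$: this covers both $\alpha<\alpha_0$ with $\ell=\infty$ (where the dichotomy forces $f(B)=0$) and $\alpha=\alpha_0=\sqrt{2F(B)}$ with $B$ the smallest maximizer of $F$ (so again $f(B)=0$). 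The orbit then increases monotonically and stays below $B$, and its limit $L:=\lim_{x\to\infty}v(x)\le B$ must equal $B$, for $L<B$ would give $v'\to\sqrt{\psi(L)}>0$, incompatible with $v$ having the finite limit $L$; hence $\widetilde V_\alpha$ is defined on $[0,\infty)$, strictly increasing, with $\widetilde V_\alpha\to B$. In case (iii) ($B=\infty$, i.e. $\psi(r)>0$ for all $r\ge 0$, which also subsumes the situation where \eqref{cond3} fails and $F\le 0$) the derivative stays positive for every value of $v$, so $v$ increases without bound and, by $x=\int_0^{v}\psi(r)^{-1/2}\,dr$, the solution $\widehat V_\alpha$ is defined exactly on $[0,\ell)$ with $\ell=\int_0^\infty\psi(r)^{-1/2}\,dr\in(0,\infty]$ and $\widehat V_\alpha(x)\to\infty$ as $x\to\ell$. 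Uniqueness throughout is already guaranteed by $f\in C^1$, so no separate argument is needed.
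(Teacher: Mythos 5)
Your proof is correct and follows essentially the same route the paper intends: the paper derives the first integral $(v')^2=\alpha^2-2F(v)$ and then simply asserts the trichotomy ``by a simple analysis,'' and your argument is that analysis carried out in full, with the only genuinely delicate point---the convergence or divergence of $\int_0^B\big(\alpha^2-2F(r)\big)^{-1/2}dr$ at $r=B$ according to whether $f(B)>0$ or $f(B)=0$---handled correctly via the $C^1$ expansion of $f$ at $B$ and the sign constraint $f'(B)\le 0$. The reflection/uniqueness argument for the symmetry in case (i) and the elimination of a finite limit $L<B$ in cases (ii)--(iii) are also sound, so nothing is missing.
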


It is easily seen that, if $V_\alpha (x)$ (resp. $\widetilde{V}_\alpha (x)$,
$\widehat{V}_\alpha (x)$) is a solution of \eqref{ellip-p}, then for any $b\in\R$
$V_\alpha (\pm x+b)$ (resp. $\widetilde{V}_\alpha (\pm x +b)$,
$\widehat{V}_\alpha (\pm x+b)$) are also stationary solutions of $u_t = u_{xx} +f(u)$.

\subsection{Existence of the limits of $h(t)$ and $g(t)$}

\begin{lem}\label{lem:finite-osc}
Let $(u,h,g)$ be a solution of \eqref{p} defined on some maximal existence
interval $[0,T_*)$.  Then for any $b\in \R$,
$h(t)-b$ changes sign at most finite many times. The same is true for $g(t)-b$.
\end{lem}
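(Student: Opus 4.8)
The plan is to prove the statement for $h$ (the argument for $g$ being symmetric) by a Sturmian zero-number argument, comparing the solution $u(t,\cdot)$ with a suitable stationary solution pinned at the level $b$. Fix $b\in\R$ and let $V$ be the solution of \eqref{ellip-p} furnished by Lemma \ref{lem:stationary-alpha}; set
$$
W(x):=V(b-x),\qquad x\in (b-L,\,b],
$$
where $(0,L)$ is the maximal interval of existence of $V$. Then $W$ is a stationary solution of $u_t=u_{xx}+f(u)$ with $W(b)=0$, $W'(b)=-\alpha$, and $W>0$ strictly decreasing on $(b-L,b)$. The point of this choice is the boundary identity: whenever $h(\tau)=b$ we have $u(\tau,b)=W(b)=0$, hence, writing $w:=u-W$,
$$
w(\tau,b)=0,\qquad w_x(\tau,b)=u_x(\tau,b)+\alpha=-h'(\tau).
$$
Thus a downward crossing of $b$ by $h$ (where $h'(\tau)\le 0$) forces $w_x(\tau,b)\ge 0$, so $w\le 0$ just to the left of $b$, while an upward crossing forces $w_x(\tau,b)\le 0$, so $w\ge 0$ just to the left of $b$; consecutive crossings in opposite directions therefore reverse the sign of $w$ near $x=b$.

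Next I would study the zero number of $w$. On its domain of definition $w$ solves a linear parabolic equation $w_t=w_{xx}+c(t,x)w$ with $c:=\big(f(u)-f(W)\big)/(u-W)$ bounded, since $f\in C^1$ and $u,W$ are bounded on the relevant region. Fix $t_0\in(0,T_*)$; by the Sturmian theory the number $\mathcal Z(t)$ of sign changes of $w(t,\cdot)$ over the overlap of the two domains is finite for $t\ge t_0$, is non-increasing in $t$, and drops strictly whenever $w(t,\cdot)$ develops a degenerate zero (a point where $w=w_x=0$). The mechanism of the proof is then to show that each reversal of the crossing direction of $h$ at the level $b$ — i.e. between an upward and the next downward crossing, or vice versa — forces the sign reversal of $w$ near $x=b$ noted above to be registered either as a degenerate interior zero or as a zero absorbed at the boundary point $x=b$, and hence causes $\mathcal Z$ to decrease by at least one. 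Since $\mathcal Z(t)\ge 0$ is a non-increasing integer for $t\ge t_0$, only finitely many reversals, and therefore only finitely many sign changes of $h-b$, can occur on $[t_0,T_*)$; as $[0,t_0]$ contributes finitely many by continuity and compactness, the claim for $h$ follows. The same argument with the reflected stationary solution $V(x-b)$ (for which $u_x(\tau,b)-\alpha=-g'(\tau)$ at a crossing $g(\tau)=b$) handles $g$.

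The main obstacle is the careful bookkeeping of $\mathcal Z(t)$ across the moving right endpoint. When $h(t)>b$ the point $b$ lies in the interior of $[g(t),h(t)]$ and $x=b$ is an ordinary interior level, but when $h(t)<b$ the right end of the overlap region is the free boundary $x=h(t)$, where $u=0$ while $W>0$, so $w<0$ there. One must verify that the appropriate boundary version of the zero-number lemma applies at such a vanishing free boundary, and that the passage of $h$ through the level $b$ only annihilates sign changes rather than spuriously creating new ones. A secondary point is to ensure the overlap region remains a genuine left-neighborhood of $b$ during the relevant times, which requires separating off the shrinking scenario in which $g(t)$ would also approach $b$; since shrinking is a single terminal event (Theorem \ref{thm:global}), this causes no real difficulty.
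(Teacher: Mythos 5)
Your proposal is correct and follows essentially the same route as the paper: the paper compares $u$ with the stationary solution $V_\alpha(x-b+2\ell)$ pinned at $x=b$ with slope $-\alpha$ there (which, by the symmetry of $V_\alpha$, is exactly your $W(x)=V(b-x)$ in the compactly supported case), derives the same identity relating $h'(\tau)$ to $\eta_x(\tau,b)$ at crossing times via the Hopf lemma, and concludes that each crossing strictly decreases the Sturmian zero number of $u-V_\alpha(\cdot-b+2\ell)$ on the overlap region $[\max\{g(t),b-2\ell\},\min\{h(t),b\}]$. The bookkeeping you flag as the remaining obstacle is precisely what the paper carries out through its case analysis on the position of $b$ relative to $[-h_0,h_0]$ and its tracking of the largest zero $\tilde z(t)$.
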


\begin{proof}
We only consider the case that Problem \eqref{ellip-p} has a solution
$V_\alpha (x)$ on the compact interval $[0,2\ell]$ as in Lemma \ref{lem:stationary-alpha} (i),
and prove the lemma for $h(t)-b$. Other cases can be proved similarly.

(1) We first consider the case where $h_0 \leq b-2\ell$. In this case if
$h(t)$ moves (rightward) across $b-2\ell$ at some time (denote $t_1$ the first of such times),
then just after $t_1$, the function $\eta(t,x):= u(t,x)-V_\alpha (x-b+2\ell)$
has exactly one zero $z(t)$ on $[b-2\ell, h(t)]$.
By the maximum principle, the number of zeros of $\eta(t,\cdot)$ remains $1$ until
one of the following three cases happens.

(i) $h(t)$ shrinks back and crosses $b-2\ell$ again.
Then the situation becomes the same as in the very beginning.

(ii) $g(t)$ moves (rightward) across $b-2\ell$ at time $t_2$ while $h(t)$ remains
in $(b-2\ell, b]$ in time interval $(t_1, t_2]$. In this case
$g'(t_2) \geq 0$ and we have the following claim:
\vskip 6pt
\noindent
\underline{{\it Claim} 1}: $z(t)$ moves to $b-2\ell$ as $t\to t_2$, $u(t,x)\leq V_\alpha (x-b+2\ell)$
for $t>t_2$.
\vskip 6pt
\noindent
If $b-2\ell < z(t_2)$, then $\eta(t_2, \cdot)$ has exactly two zeros $b-2\ell$ and $z(t_2)$.
Consider $\eta(t,x)$ in the domain
$\{(t,x): b- 2\ell < x < z(t), t_1 < t \leq t_2\}$. By the maximum principle we have
$$
\eta(t_2, x) > 0 \mbox{ for } x\in (b-2\ell , z(t_2)),\quad
\eta(t_2, b-2\ell )=0 \quad \mbox {and } \quad \eta_x (t_2, b-2\ell) > 0.
$$
The last inequality implies that
$$
g'(t_2) = -u_x (t_2, b-2\ell) +\alpha < -V'_\alpha (0) +\alpha =0,
$$
contradicting $g'(t_2)\geq 0$. This proves the first part of Claim 1.
Hence $z(t_2)=b-2\ell$ and $u(t_2, x) \leq V_\alpha(x-b+2\ell)$ on
$[b-2\ell, h(t_2)] \subset [b-2\ell, b]$. By the comparison principle
(Lemma \ref{lem:comp1}) we have $u(t, x) \leq V_\alpha(x-b+2\ell)$ for all $x\in (g(t), h(t))$
and $t>t_2$. This proves Claim 1.

Consequently, $h(t)\leq b$ for all $t>0$ in case (ii).

(iii) $h(t)$ moves (rightward) across $b$ at time $t_3$ while $g(t)<b-2\ell$ for all
$t\in [t_1, t_3]$. In this case $h'(t_3)\geq 0$ and we have the following claim:
\vskip 6pt
\noindent
\underline{{\it Claim} 2}: $z(t)$ moves to $b$ as $t\to t_3$.
\vskip 6pt
\noindent
Otherwise, $z(t_3) < b$ and so we can use the maximum principle in $\{(t,x):
z(t)< x < h(t), t_1 < t \leq t_3\}$ to conclude that $\eta_x (t_3, b) > 0$.
This implies that $h'(t_3) = -u_x(t_3, b) - \alpha < -V'_\alpha (2\ell) -\alpha =0$,
contradicting $h'(t_3)\geq 0$. This proves Claim 2. Consequently,
$z(t_3) =b$ and $\eta (t_3,x) >0$ in $[b-2\ell, b)$. Therefore,
$u(t,x) >V_\alpha (x-b+2\ell)$ for all $t>t_3$. Hence $h(t)-b$ changes sign
only once till time $t_3$.

(2) Next we consider the case where $b\in (-h_0, h_0)$.

(i) If $-h_0 \leq b-2\ell$ and $u_0(x) \geq V_\alpha (x-b+2\ell)$ on $[b-2\ell, b]$,
then $u(t,x) > V_\alpha (x-b+2\ell)$ for all $t>0$ and $x\in [b-2\ell, b]$ by comparison
principle, and so $h(t)>b$ for all $t>0$.

(ii) If $u_0(x) - V_\alpha (x-b+2\ell) <0$ at some points,
then by \cite[Lemma 2.3]{DM} or \cite[Theorems C and D]{A}, for any $t>0$, the function
$\eta(t,x) := u(t,x) - V_\alpha (x-b+2\ell)$ has finite number of zeros on its domain
$J(t) := [x_1 (t), x_2(t)]$, where
$$
x_1(t) := \max\{ g(t), b-2\ell\},\quad  x_2(t) := \min\{ h(t), b\}.
$$
Denote the largest zero of $\eta(t,x)$ on $J(t)$ by $\tilde{z}(t)$. Clearly, $\tilde{z}(t) < x_2(t) = b$
for small $t>0$.

If $h(t)$ moves (leftward) across $b$ at time $t_4>0$ (with $h(t)>b$ for $t\in (0,t_4)$),
then $h'(t_4)\leq 0$ and we have the following claim.
\vskip 6pt
\noindent
\underline{{\it Claim} 3}: $\tilde{z}(t)$ moves to $b$ as $t\to t_4$, and this zero disappear just after $t_4$.
\vskip 6pt
\noindent
The former part of this claim is proved in a similar way as proving Claim 1.
So $\tilde{z}(t_4) =b$ and $\eta (t_4,x)<0$ just on the left side of $b$ (since the zeros of $\eta(t_4,\cdot)$
are discrete). Using Hopf lemma we have $\eta_x(t_4, b) >0$ and so
$h'(t_4) = -u_x(t_4, b) - \alpha < 0$. Thus $h(t)<b$ for $t>t_4$ and $t-t_4$ small. This proves Claim 3.

Claim 3 implies that once $h(t)$ moves across $b$, the number of zeros of $\eta(t,\cdot)$
decreases strictly. Consequently, $h(t)-b$ can not change sign infinite many times.

(3) Other cases including $b\in [h_0, h_0+2\ell]$ and $b \leq -h_0$
can be studied similarly.
\end{proof}

\noindent
{\it Proof of Proposition \ref{prop:exist limits}}. The conclusions follow from
the previous lemma and its proof. \hfill $\Box$

\bigskip

By the proof of the previous lemma we also have the following result.

\begin{cor}\label{cor:h-g}
Let $(u,h,g)$ be a solution of \eqref{p} defined on some maximal
existence interval $[0,T_*)$. If \eqref{ellip-p} has solution $V_\alpha$ with compact
support and if $h(t_1) > h_0 +2\ell$ at time $t_1$, then
$u(t,x)>V_\alpha (x-h_0)$ for all $x\in [h_0 , h_0+2\ell]$ and $t>t_1$.
\end{cor}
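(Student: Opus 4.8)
The corollary is the special case $b := h_0 + 2\ell$ of the mechanism used in the proof of Lemma \ref{lem:finite-osc}, and my plan is to isolate that mechanism and then propagate the resulting inequality forward by comparison. Set $W(x) := V_\alpha(x - h_0)$, which by Lemma \ref{lem:stationary-alpha}(i) is the stationary bump on $[h_0, b]$ with $W(h_0) = W(b) = 0$, $W'(h_0) = \alpha$ and $W'(b) = V'_\alpha(2\ell) = -\alpha$. Since these boundary slopes match the free-boundary conditions exactly, the triple $(W, h_0, b)$ with constant boundaries is a stationary lower solution of \eqref{p} on the fixed interval $[h_0, b]$; this is the engine of the final step.

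Because $h(0) = h_0 < b$ while $h(t_1) > b$, the boundary $x = h(t)$ crosses the level $b$; let $t_3 \in (0, t_1]$ be the first time with $h(t_3) = b$, so that $h(t) < b$ on $[0, t_3)$ and hence $h'(t_3) \ge 0$. I would run the Sturmian zero-number argument on $\eta(t,x) := u(t,x) - W(x)$ exactly as in case (1) of the proof of Lemma \ref{lem:finite-osc} (here with $b - 2\ell = h_0$). Tracking $\eta$ from the moment $h$ first crosses $h_0$ rightward, the number of its zeros on $[h_0, h(t)]$ is controlled by the maximum principle and can be reorganized only through the three sub-cases listed there. The shrink-back sub-case merely resets the count and, by the finiteness of the sign changes of $h(t) - h_0$ (Lemma \ref{lem:finite-osc}), occurs only finitely often; the sub-case in which $g(t)$ crosses $h_0$ while $h \le b$ would force $h(t) \le b$ for all $t$, contradicting $h(t_1) > b$, and so is excluded by hypothesis. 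Consequently $g(t) < h_0$ up to $t_3$ and we are necessarily in the remaining sub-case, the rightward crossing of $b$.

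The crux is then the Hopf argument at $t_3$: if the largest zero $z(t)$ of $\eta(t,\cdot)$ satisfied $z(t_3) < b$, the maximum principle on $\{(t,x): z(t) < x < h(t),\ t < t_3\}$ would yield $\eta_x(t_3, b) > 0$, whence $h'(t_3) = -u_x(t_3, b) - \alpha < -W'(b) - \alpha = 0$, contradicting $h'(t_3) \ge 0$. Therefore $z(t_3) = b$, and since $\eta(t_3, h_0) = u(t_3, h_0) > 0$ while $\eta(t_3,\cdot)$ has no zero in $[h_0, b)$, we obtain $u(t_3, \cdot) > W$ on $[h_0, b)$ together with $u(t_3, b) = 0 = W(b)$; in particular $u(t_3, \cdot) \ge W$ throughout $[h_0, b] \subseteq [g(t_3), h(t_3)]$.

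Finally I would invoke the lower-solution form of Lemma \ref{lem:comp1} with the stationary lower solution $(W, h_0, b)$: from $u(t_3, \cdot) \ge W$ on $[h_0, b]$ it follows that $g(t) \le h_0 \le b \le h(t)$ for all $t > t_3$, so the fixed interval stays inside the domain, and that $u(t, \cdot) \ge W$ there; the strong maximum principle upgrades this to $u(t, x) > V_\alpha(x - h_0)$ for all $x \in [h_0, h_0 + 2\ell]$ and $t > t_3$, hence in particular for $t > t_1 \ge t_3$. I expect the only genuine subtlety to be the bookkeeping of the zero count across the shrink-back resets and the verification that the "$g$ crosses $h_0$" sub-case is really ruled out by $h(t_1) > b$; once $u(t_3, \cdot) \ge W$ on $[h_0, b]$ is secured, the comparison step is routine.
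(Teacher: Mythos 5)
Your proposal is correct and follows essentially the same route as the paper, which proves the corollary by invoking case (1)(iii) of the proof of Lemma \ref{lem:finite-osc} with $b = h_0 + 2\ell$: the Sturmian zero-number and Hopf argument at the first crossing time of $b$ yields $u \geq V_\alpha(\cdot - h_0)$ on $[h_0, b]$, and the stationary lower solution with matching boundary slopes $\pm\alpha$ then propagates the inequality forward by comparison. Your explicit bookkeeping of the shrink-back resets and the exclusion of the ``$g$ crosses $h_0$'' sub-case is exactly the content the paper leaves implicit in the phrase ``by the proof of the previous lemma.''
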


\noindent
By Theorem \ref{thm:convergence}, we see that under the assumptions of this corollary,
$u(t,\cdot)$ converges as $t\to \infty$ to a positive solution of
$v''+f(v)=0$ locally uniformly in $\R$, and hence $h(t)\to \infty$ and $g(t)\to -\infty$.

\subsection{Estimates of $h(t)+g(t)$ and monotonicity of $u$}
\begin{lem}\label{lem:center}
Suppose that $(u,g, h)$ is a solution of \eqref{p} on $[0,T_*)$.
Then
\begin{equation}\label{g+h}
-2h_0<g(t)+h(t)<2h_0 \; \mbox{ for all } t\in [0,T_*).
\end{equation}
\end{lem}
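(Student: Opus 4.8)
The plan is to prove the upper bound $g(t)+h(t)<2h_0$; the lower bound then follows by applying the upper bound to the mirror solution $u(t,-x)$, which again solves \eqref{p} with initial datum $u_0(-\cdot)\in\mathscr X(h_0)$ and has free boundaries $-h(t)$ and $-g(t)$, so that the upper bound for it reads $-h(t)-g(t)<2h_0$. The key device is reflection across the line $x=h_0$: set $\bar u(t,x):=u(t,2h_0-x)$. Since the equation in \eqref{p} is autonomous and invariant under $x\mapsto -x$, a direct check of the four boundary identities shows that $(\bar u,\bar g,\bar h)$ with $\bar g(t)=2h_0-h(t)$ and $\bar h(t)=2h_0-g(t)$ is again a solution of the free boundary problem, and that $\bar u(t,h_0)=u(t,h_0)$. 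The inequality $g(t)+h(t)<2h_0$ is equivalent to the two simultaneous orderings $g(t)<\bar g(t)$ and $h(t)<\bar h(t)$, both of which hold (the first as the equality $g+h=0$) at $t=0$.

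Suppose the upper bound fails. By continuity there is a first time $t_0\in(0,T_*)$ with $g(t_0)+h(t_0)=2h_0$ and $g(t)+h(t)<2h_0$ on $[0,t_0)$; at $t_0$ one has $g(t_0)=\bar g(t_0)$ and $h(t_0)=\bar h(t_0)$, so the domains of $u$ and $\bar u$ coincide. For $t\in[0,t_0)$ I work on the left half of the overlap, $\Omega:=\{(t,x):0\le t<t_0,\ \bar g(t)<x<h_0\}$ (its $t$-slice being a genuine interval exactly when $h(t)>h_0$, which holds near $t_0$). On $\Omega$ the difference $\eta:=u-\bar u$ solves the linear parabolic equation $\eta_t=\eta_{xx}+c(t,x)\eta$ with $c=\int_0^1 f'(\bar u+s(u-\bar u))\,ds$ bounded (by the regularity in Theorem \ref{thm:local} and $f\in C^1$), and on the parabolic boundary of $\Omega$ one has $\eta\ge 0$: indeed $\eta=u>0$ on $x=\bar g(t)$, $\eta=0$ on $x=h_0$ since $\bar u(t,h_0)=u(t,h_0)$, and $\eta\to 0$ on the slices that degenerate to the point $h_0$. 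The maximum principle (the bounded zeroth-order term $c$ being harmless) gives $\eta\ge 0$ in $\Omega$, and since $\eta=u\not\equiv 0$ on $x=\bar g(t)$ the strong maximum principle upgrades this to $\eta>0$ in the interior of $\Omega$; in particular $\eta(t_0,\cdot)\not\equiv 0$.

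It remains to reach a contradiction at $t_0$. As $t\uparrow t_0$ the left boundary curve $x=\bar g(t)$ of $\Omega$ descends to the point $(t_0,g(t_0))$, where $\eta(t_0,g(t_0))=0$ because both $u$ and $\bar u$ vanish at the now-common left free boundary. Applying the parabolic Hopf boundary point lemma to $\eta$ at $(t_0,g(t_0))$, with $\eta>0$ in the interior of $\Omega$ immediately to its right, yields the strict inward inequality $\eta_x(t_0,g(t_0))=u_x(t_0,g(t_0))-\bar u_x(t_0,g(t_0))>0$. Feeding this into the free boundary laws $g'=-u_x(\cdot,g)+\alpha$ and $\bar g'=-\bar u_x(\cdot,\bar g)+\alpha$ at $t_0$ gives $(g-\bar g)'(t_0)=-\eta_x(t_0,g(t_0))<0$. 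This is impossible, since $g-\bar g<0$ on $[0,t_0)$ together with $g-\bar g=0$ at $t_0$ forces $(g-\bar g)'(t_0)\ge 0$. Hence no such $t_0$ exists and $g(t)+h(t)<2h_0$ throughout $[0,T_*)$.

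The step I expect to be most delicate is the last one: $(t_0,g(t_0))$ is simultaneously a lateral and a terminal point of $\Omega$, a space-time corner where the two free boundary curves $x=g(t)$ and $x=\bar g(t)$ meet, so invoking the Hopf lemma there requires fitting an interior parabolic ball tangent to the $C^1$ lateral boundary $x=\bar g(t)$ from the past. This is precisely the reflection-at-the-free-boundary argument used in \cite{DuLin,DuLou}, and the comparison principle Lemma \ref{lem:comp1} together with the a priori regularity supply the boundedness and smoothness that make it rigorous; the alternative degenerate possibility $\eta(t_0,\cdot)\equiv 0$ is already excluded by the strong maximum principle above.
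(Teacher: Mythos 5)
Your proof is correct and follows essentially the same route as the paper, which proves this lemma by exactly the reflection-across-$x=h_0$ plus maximum-principle/Hopf argument of \cite[Lemma 2.8]{DuLou} (working on the right half $h_0<x<h(t)$ rather than your left half, which is the same computation by symmetry). The contradiction $(g+h)'(t_0)<0$ at a first touching time, obtained from the Hopf lemma at the corner where the reflected free boundaries meet, is precisely the intended mechanism.
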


This lemma can be proved by the maximum principle in a similar way as in
\cite[Lemma 2.8]{DuLou}.

\begin{lem}\label{lem:monotonicity}
{\rm (i)} Assume $g(T)=-h_0$ and $g(t)<-h_0\ (t>T)$. Then $u_x(t,x)>0$ for all $g(t)\leq x < -h_0$ and $t>T$.
{\rm (ii)} Assume $h(T)=h_0$ and $g(t)>h_0\ (t>T)$. Then $u_x(t,x)<0$ for all $h_0 < x\leq h(t)$ and $t>T$.
\end{lem}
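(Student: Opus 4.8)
The plan is to prove (i) in detail; part (ii) is the mirror image, obtained by reflecting about $x=h_0$ and invoking the other half of Lemma~\ref{lem:center}. Fix a time level and work on the region
\[
\Omega := \{(t,x): t>T,\ g(t)<x<-h_0\},
\]
which opens up from the corner $(T,-h_0)$ since $g(T)=-h_0$ and $g(t)<-h_0$ for $t>T$. Set $w:=u_x$; differentiating the equation shows $w$ solves the linear parabolic equation $w_t=w_{xx}+f'(u)\,w$ in $\Omega$, whose zeroth-order coefficient is bounded on every compact time subinterval of $[T,T_*)$ (there $u$ is a bounded classical solution, so $f'(u)$ is bounded). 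The goal is to run a minimum principle for $w$ on $\Omega$ and conclude $w>0$; for this I need $w>0$ on the whole parabolic boundary $\partial_p\Omega$.

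On the free boundary $x=g(t)$ this is immediate: $u>0$ inside and $u=0$ on $x=g(t)$, so the Hopf lemma gives $w(t,g(t))=u_x(t,g(t))>0$; the same Hopf lemma at $t=T$, where $g(T)=-h_0$, gives $w(T,-h_0)>0$, which together with continuity of $u_x$ takes care of the corner. The delicate part is the right edge $x=-h_0$: for $t>T$ this is an \emph{interior} point of $(g(t),h(t))$, so $u$ carries no boundary condition there and the sign of $u_x(t,-h_0)$ is not available directly. This is the main obstacle.

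To resolve it I will reflect about the line $x=-h_0$. Put $\tilde u(t,x):=u(t,-2h_0-x)$; since the equation is invariant under $x\mapsto -2h_0-x$, $\tilde u$ solves the same equation, so $W:=u-\tilde u$ satisfies a linear parabolic equation $W_t=W_{xx}+c(t,x)W$ with $c$ bounded. The reflection is admissible on $\Omega$ precisely because Lemma~\ref{lem:center} gives $g(t)+h(t)>-2h_0$, whence $-2h_0-x\in(-h_0,\,-2h_0-g(t))\subset(-h_0,h(t))$ for every $x\in(g(t),-h_0)$, so $\tilde u$ is well defined and positive on $\Omega$. On $\partial_p\Omega$ one checks $W\equiv 0$ on $x=-h_0$, while $W(t,g(t))=-u(t,-2h_0-g(t))<0$ because $-2h_0-g(t)$ is an interior point, and $W=0$ at the corner. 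The maximum principle (after the usual $e^{-Kt}$ reduction to render the zeroth-order term nonpositive) yields $W\le 0$ on $\Omega$, and the strong maximum principle upgrades this to $W<0$ in the interior. Applying the Hopf lemma to $W$ at $x=-h_0$, where $W$ attains its boundary maximum $0$, gives $W_x(t,-h_0)>0$; since $W_x=u_x(t,x)+u_x(t,-2h_0-x)$, evaluation at $x=-h_0$ gives $2u_x(t,-h_0)>0$, i.e. $w(t,-h_0)>0$.

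With $w>0$ now established on all of $\partial_p\Omega$, the minimum principle for $w_t=w_{xx}+f'(u)\,w$ (again via the $e^{-Kt}$ reduction, on each compact subinterval $[T,t_0]\subset[T,T_*)$) forces $w>0$ throughout $\Omega$, that is $u_x(t,x)>0$ for $g(t)<x<-h_0$; combined with the Hopf value at $x=g(t)$ this gives the claim on $g(t)\le x<-h_0$ for every $t>T$. The two points needing a word of care are the degeneracy of $\Omega$ at the corner $(T,-h_0)$ — handled by noting that the parabolic boundary of the opening wedge consists only of the two lateral curves and the vertex, on all of which the sign conditions already hold — and the boundedness of the coefficients, which holds on each compact time interval since the classical solution keeps $u$ (and hence $\tilde u$) bounded there.
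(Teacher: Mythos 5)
Your argument is correct in substance, but it is organized differently from the paper's. Both proofs are reflection arguments, yet the paper reflects about the \emph{target} point: for a given $\tilde t>T$ and $\tilde x\in(g(\tilde t),-h_0)$ it picks the last time $t_1$ with $g(t_1)=\tilde x$, compares $u(t,x)$ with $u(t,2\tilde x-x)$ on $\{t_1<t\le\tilde t,\ g(t)<x<\tilde x\}$, and reads off $2u_x(\tilde t,\tilde x)=z_x(\tilde t,\tilde x)>0$ directly from the Hopf lemma at $x=\tilde x$ --- one maximum-principle step per point, applied only to differences of solutions of the original equation. You instead reflect once about the fixed line $x=-h_0$ to get $u_x(t,-h_0)>0$, and then run a second maximum-principle argument for $w=u_x$ on the whole wedge $\Omega$ to propagate positivity inward. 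Your first step is fine (and you correctly identify that Lemma~\ref{lem:center} is what makes the reflection admissible, a point the paper leaves implicit). The caveat is in your second step: differentiating the equation to assert that $w$ is a \emph{classical} solution of $w_t=w_{xx}+f'(u)w$ requires more regularity than the standing hypothesis $f\in C^1$ guarantees ($f'(u)$ is bounded and continuous but need not be H\"older, so $u$ is $C^{1+\nu/2,2+\nu}$ but not obviously three times differentiable in $x$). This is repairable --- $w$ is a strong $W^{2,1}_p$ solution and the maximum principle applies to such solutions, or one can approximate $f$ --- but it is exactly the technicality the paper's pointwise reflection avoids, which is why its version is the more economical one. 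If you keep your structure, add a sentence justifying the sense in which $w$ solves the linearized equation.
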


\begin{proof}
We only prove (i) since (ii) is proved similarly. For this purpose,
we need to prove $u_x(\tilde{t}, \tilde{x}) >0$ for any given
$\tilde{t} >T$ and any $\tilde{x}\in [g(\tilde{t}), -h_0)$.

If $\tilde{x}=g(\tilde{t})$, then $u_x(\tilde{t},\tilde{x}) >0$ by Hopf lemma.
In what follows we assume $\tilde{x} \in (g(\tilde{t}), -h_0)$.
Then, there exists $t_1 \in (T, \tilde{t})$ such that $g(t_1)=\tilde{x}$ and $g(t)< \tilde{x}$ for
$t\in (t_1, \tilde{t}]$.  Consider
\[
z(t,x):=u(t,x)-u(t, 2\tilde{x}-x)
\]
over $G :=\{(t,x): t_1 <t \leq \tilde{t}, g(t)< x< \tilde{x}\}$. We have
\[
z_t=z_{xx}+c(t,x)z \mbox{ in } G, \quad (c \mbox{ is a bounded function}),
\]
\[
z(t, g(t))<0 \mbox{ and }  z(t, \tilde{x})=0 \mbox{ for } t_1<t\leq \tilde{t}.
\]
Hence we can apply the strong maximum principle and the Hopf lemma
to deduce
\[
z(t,x)<0 \mbox{ in } G,\quad z_x(t, \tilde{x})>0 \mbox{ for } t_1<t\leq \tilde{t}.
\]
In particular, we have $z_x(\tilde{t},\tilde{x})=2u_x(\tilde{t},\tilde{x})>0$.
\end{proof}

\section{Vanishing phenomena}\label{sec:vanishing}
In this section we assume that $(u,g,h)$ is a solution of \eqref{p} on its maximal existence interval $[0,T_*)$.

\subsection{Uniform convergence for vanishing case}
By Proposition \ref{prop:exist limits}, $g_*:= \lim_{t\to T_*} g(t)$ and
$h_*:=\lim_{t\to T_*} h(t)$ exist and $g_*\leq h_*$.
We now show that, when $g_* <h_*$, vanishing can happen in a unform topology.

\begin{lem}\label{lem:uniform vanish}
Assume $u(t,x)\leq C$ for all $0\leq t<T_*$ and $x\in [g(t),h(t)]$.
If $g_* < h_*$ and if $\lim_{t\to T_*}u(t,\cdot) =0$
locally uniformly in $I_* :=(g_*, h_*)$, then
$\|u(t,\cdot)\|_{L^\infty ([g(t),h(t)])} \to 0$ as $t\to T_*$.
\end{lem}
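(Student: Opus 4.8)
The plan is to show that local uniform convergence can only fail to be uniform inside thin layers adjacent to the two free boundaries $x=g(t)$ and $x=h(t)$, and then to rule out any such boundary layer by controlling $u$ through a uniform bound on its spatial gradient together with the Dirichlet condition $u(t,g(t))=u(t,h(t))=0$ from \eqref{p}.

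First I would fix $\ve>0$ and record the geometry near $T_*$. Since $g(t)\to g_*$ and $h(t)\to h_*$ with $g_*<h_*$ by Proposition \ref{prop:exist limits}, there is $t_0\in(0,T_*)$ such that for all $t\in[t_0,T_*)$ one has $|g(t)-g_*|<\delta$, $|h(t)-h_*|<\delta$ and $h(t)-g(t)>(h_*-g_*)/2>0$, where $\delta>0$ is a small parameter to be fixed below. In particular, the moving domain stays nondegenerate as $t\to T_*$, which is what the hypothesis $g_*<h_*$ buys us.

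The crux is a uniform-in-time gradient bound: I claim there is a constant $C_3>0$, independent of $t$, with $|u_x(t,x)|\le C_3$ for all $t\in[t_0,T_*)$ and $x\in[g(t),h(t)]$. At the free boundaries this is immediate, since \eqref{p} gives $u_x(t,h(t))=-h'(t)-\alpha$ and $u_x(t,g(t))=\alpha-g'(t)$, both bounded by the a priori estimates $-\alpha<-g'(t),h'(t)\le C_2$ of Lemma \ref{lem:bound-general} (whose constant $C_2$ is independent of the time of existence). For interior points I would straighten the free boundary by a change of variables as in the proof of Theorem \ref{thm:local}, turning \eqref{p} into a uniformly parabolic equation on a fixed interval whose coefficients are controlled by the bounds $0<u\le C$, by the bounded boundary speeds, and by the nondegeneracy of the domain; then the $L^p$/Schauder parabolic estimates, applied exactly as in the extension argument of Lemma \ref{lem:bound-general}, give a $C^{1+\nu/2,2+\nu}$ bound that is uniform on each unit time window. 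When $T_*<\infty$ this yields the estimate directly on $[t_0,T_*)$, and when $T_*=\infty$ it follows from the time-translation invariance of the constants. I expect this uniform estimate up to the free boundary, valid all the way as $t\to T_*$, to be the main obstacle.

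Granting the gradient bound, the conclusion is elementary. I would choose $\delta>0$ so small that $2C_3\delta<\ve$ and $g_*+\delta<h_*-\delta$. Using $u(t,h(t))=0$ and integrating,
$$u(t,x)=-\int_x^{h(t)}u_x(t,s)\,ds\le C_3\,(h(t)-x)\le 2C_3\delta<\ve \quad\text{for } x\in[h_*-\delta,h(t)],$$
and symmetrically $u(t,x)<\ve$ on $[g(t),g_*+\delta]$. On the remaining compact middle interval $[g_*+\delta,h_*-\delta]\subset(g_*,h_*)$, the hypothesis of local uniform convergence gives $u(t,x)<\ve$ once $t$ is close enough to $T_*$, after possibly enlarging $t_0$. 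Since these three pieces cover $[g(t),h(t)]$, we obtain $\|u(t,\cdot)\|_{L^\infty([g(t),h(t)])}<\ve$ for all $t$ near $T_*$; as $\ve>0$ is arbitrary, this proves $\|u(t,\cdot)\|_{L^\infty([g(t),h(t)])}\to 0$ as $t\to T_*$.
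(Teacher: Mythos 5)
Your argument for the case where both limits are finite is sound and is essentially the paper's: the interval $[g(t),h(t)]$ is split into two thin boundary layers plus a compact middle piece handled by the local uniform convergence hypothesis. The only methodological difference there is how the boundary layers are controlled: you invoke a uniform-in-time gradient bound up to the free boundary obtained from $L^p$/Schauder theory on the straightened problem, whereas the paper gets the same one-sided estimate $u(t,x)\leq 2MC\,(h(t)-x)$ from the explicit parabolic barrier $U(t,x)=C[2M(h(t)-x)-M^2(h(t)-x)^2]$ already built in Lemma \ref{lem:bound-general}. The barrier route is lighter (no need to justify uniform regularity up to a moving boundary), but your route works too, provided you note that the domain is uniformly nondegenerate on all of $[0,T_*)$ (positive continuous on $[0,t_0]$, bounded below by $(h_*-g_*)/2$ afterwards).

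There is, however, a genuine gap: you have silently assumed $g_*$ and $h_*$ are finite. The hypothesis is only $g_*<h_*$, which allows $h_*=+\infty$ and/or $g_*=-\infty$, and the lemma is in fact invoked in exactly that a priori situation (in the proof of Lemma \ref{lem:v->h<infty}, where the finiteness of $h_*$ is the conclusion, not an assumption). Your opening step ``there is $t_0$ such that $|h(t)-h_*|<\delta$'' is meaningless when $h_*=\infty$, and more seriously the decomposition collapses: the complement of the boundary layers, $[g(t)+\ve/C_3,\,h(t)-\ve/C_3]$, is no longer contained in a fixed compact subset of $(g_*,h_*)$ when $h(t)\to\infty$, so locally uniform convergence says nothing about it. The paper closes this case by a different mechanism, the monotonicity result of Lemma \ref{lem:monotonicity}: once $h(t)>h_0$ and $g(t)<-h_0$, $u(t,\cdot)$ is decreasing for $x>h_0$ and increasing for $x<-h_0$, so $\|u(t,\cdot)\|_{L^\infty([g(t),h(t)])}=\|u(t,\cdot)\|_{L^\infty([-h_0,h_0])}$, and the sup norm is controlled by convergence on the fixed compact set $[-h_0,h_0]$. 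You would need to add this (or an equivalent) argument to cover the unbounded case.
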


\begin{proof}
We first consider the case where $T_* =\infty$ and $h_* =\infty$, $g_*=-\infty$.
In this case, Lemma \ref{lem:monotonicity} implies that $u(t,x)$ is decreasing in $x>h_0$ and
increasing in $x<-h_0$ for large $t$. Hence
$\|u(t,\cdot)\|_{L^\infty ([-h_0,h_0])} = \|u(t,\cdot)\|_{L^\infty ([g(t),h(t)])}$
for large $t$. By our assumption, $\lim_{t\to \infty}u(t,\cdot) =0$
uniformly on $x\in [-h_0,h_0]$. Hence $\|u(t,\cdot)\|_{L^\infty ([g(t),h(t)])} \to 0$ as $t\to \infty$.

Next we consider the case where $T_* =\infty$, $-\infty< g_* =g_\infty< h_*=h_\infty <\infty$.
In the same way as in the proof of Lemma \ref{lem:bound-general} we construct a function
\[
U(t,x)=C \big[2M(h(t)-x)-M^2 (h(t)-x)^2\big]
\]
over the region $Q:= \{(t,x):t>0, \ \max\{ h(t)-M^{-1}, g(t)\} <x<h(t)\}$, where
$$
M:= \max\Big\{ \frac{\alpha +\sqrt{\alpha^2 +2K_1}}{2}, \frac{4\|u_0\|_{C^1([-h_0,h_0])}}{3C} \Big\}
$$
and $K_1 : =\sup_{0\leq u\leq C} |f'(u)|$. As in Lemma \ref{lem:bound-general} we have
$u(t,x)\leq U(t,x)$ in $Q$.

For any small $\varepsilon>0$, there exists $\delta>0$ small with
$$
\delta < \min\Big\{  \frac{1}{M},\ \frac{h_\infty -g_\infty}{4},\ \frac{\varepsilon}{4MC} \Big\}
$$
such that
\begin{equation}\label{delta skip}
U(t,x) \leq \varepsilon \quad \mbox{ for } h(t)-2\delta\leq x\leq h(t).
\end{equation}

For this $\delta$ there exists a large time $T_1$ such that
$$
|h(t)-h_\infty |\leq \delta, \ \ |g(t)-g_\infty| \leq \delta , \quad  t>T_1.
$$
Then, for any $t>T_1$ and
$x\in [h_\infty -\delta, h(t)] \subset [h_\infty -\delta, h_\infty +\delta]$ we have
\begin{equation}\label{right bdry 1}
u(t,x) \leq U(t,x) \leq C \big[2M(h_\infty +\delta -x)-M^2 (h_\infty +\delta -x)^2\big]
\leq 4MC\delta \leq \varepsilon.
\end{equation}
Similarly we have
\begin{equation}\label{left bdry 1}
u(t,x) \leq \varepsilon,\quad x\in [g(t), g_\infty +\delta],\ t>T_2,
\end{equation}
for some $T_2\geq T_1$. On the other hand, $\|u(t,\cdot)\|_{L^\infty([g_\infty +\delta, h_\infty-\delta])}
\to 0$ as $t\to \infty$. Hence there exists $T_3 \geq T_2$ such that
$$
\|u(t,\cdot)\|_{L^\infty([g_\infty +\delta, h_\infty-\delta])} \leq \varepsilon,\quad t>T_3.
$$
Combining this inequality with \eqref{right bdry 1} and \eqref{left bdry 1} we have
$\|u(t,\cdot)\|_{L^\infty ([g(t),h(t)])} \leq \varepsilon$ for $t>T_3$. This proves the conclusion.

Finally, in case $T_* <\infty$ and $-\infty<g_* <h_* <\infty$, the conclusion can be
proved in the same way by using the function $U(t,x)$.
\end{proof}

The proof of this lemma also gives the following result.
\begin{lem}\label{lem:s->v}
Assume $u(t,x)\leq C$ for all $0\leq t<T_*$ and $x\in [g(t),h(t)]$.
If $h(t)-g(t)$ shrinks as $t\to T_*$, then $u$ also vanishes as $t\to T_*$.
\end{lem}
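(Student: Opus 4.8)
The plan is to reuse the barrier function $U$ constructed in the proof of Lemma~\ref{lem:uniform vanish} (equivalently Lemma~\ref{lem:bound-general}) and to exploit the fact that shrinking forces the barrier itself to collapse to zero. Recall that, with
\[
M:= \max\Big\{ \frac{\alpha +\sqrt{\alpha^2 +2K_1}}{2},\ \frac{4\|u_0\|_{C^1([-h_0,h_0])}}{3C} \Big\},\qquad K_1:=\sup_{0\le u\le C}|f'(u)|,
\]
the function $U(t,x)=C[2M(h(t)-x)-M^2(h(t)-x)^2]$ is a supersolution on the region $Q:=\{(t,x):0\le t<T_*,\ \max\{h(t)-M^{-1},g(t)\}<x<h(t)\}$, and the comparison principle (Lemma~\ref{lem:comp2}) gives $u(t,x)\le U(t,x)$ throughout $Q$. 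Since the assumed bound $u\le C$ holds on the whole interval $[0,T_*)$, the constant $M$ is fixed once and for all, and this inequality is valid up to $T_*$.

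The key observation is that shrinking means $h(t)-g(t)\to 0$ as $t\to T_*$. Hence there is a time $T_0<T_*$ such that $h(t)-g(t)<M^{-1}$ for all $t\in[T_0,T_*)$. For such $t$ we have $\max\{h(t)-M^{-1},g(t)\}=g(t)$, so $Q$ contains the entire cross-section $(g(t),h(t))$ and therefore $u(t,x)\le U(t,x)$ for every $x\in(g(t),h(t))$. Writing $s:=h(t)-x\in[0,h(t)-g(t)]$ and noting $Ms\le M(h(t)-g(t))<1$, we estimate
\[
U(t,x)=CMs\,(2-Ms)\le 2CMs\le 2CM\,(h(t)-g(t)).
\]

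Combining the two inequalities yields $\|u(t,\cdot)\|_{L^\infty([g(t),h(t)])}\le 2CM\,(h(t)-g(t))$ for all $t\in[T_0,T_*)$, and letting $t\to T_*$ gives $\|u(t,\cdot)\|_{L^\infty([g(t),h(t)])}\to 0$. Since shrinking also gives $\lim_{t\to T_*}h(t)=\lim_{t\to T_*}g(t)\in\R$, this is precisely case (b) in the definition of vanishing, so $u$ vanishes as $t\to T_*$. I do not expect a serious obstacle here: the argument is a direct consequence of the barrier already in hand, and the only points requiring care are that the comparison $u\le U$ persists on all of $[0,T_*)$ (which it does, as $M$ depends only on $C$, $\alpha$, $K_1$ and $\|u_0\|$, all fixed) and that the full interval $[g(t),h(t)]$ eventually lies in $Q$, which is guaranteed exactly by $h(t)-g(t)\to 0$.
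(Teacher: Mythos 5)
Your argument is correct and follows essentially the same route as the paper: both rely on the barrier $U(t,x)=C[2M(h(t)-x)-M^2(h(t)-x)^2]$ from Lemma \ref{lem:uniform vanish} together with the observation that shrinking eventually forces the whole interval $[g(t),h(t)]$ into the region where $U$ is small. Your explicit bound $\|u(t,\cdot)\|_{L^\infty}\leq 2CM\,(h(t)-g(t))$ is a slightly more quantitative phrasing of the paper's $\varepsilon$--$\delta$ argument, but the substance is identical.
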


\begin{proof}
Construct the function $U$ as above. For any $\varepsilon>0$, there exists
$\delta>0$ such that \eqref{delta skip} holds. For this $\delta$ there exists $T_0$ such that
$0<h(t)-g(t)\leq 2\delta$ for $T_0<t<T_*$, and so \eqref{delta skip} implies that
$$
u(t,x) \leq U(t,x) \leq \varepsilon, \quad x\in [g(t),h(t)] \subset [h(t)-2\delta,h(t)].
$$
This proves $\|u(t,\cdot)\|_{L^\infty ([g(t),h(t)])} \to 0$ as $t\to T_*$.
\end{proof}

\subsection{Necessary condition for vanishing}

\begin{lem}\label{lem:v->h<infty}
Assume $u(t,x)\leq C$ for all $0\leq t<T_*$ and $x\in [g(t),h(t)]$.
If $u$ vanishes as $t\to T_*$, then $h_* <\infty$ and $g_*>-\infty$.
\end{lem}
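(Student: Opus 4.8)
The plan is to argue by contradiction. By the symmetry $x\mapsto -x$ of problem \eqref{p}, it suffices to rule out $h_*=+\infty$ (the possibility $g_*=-\infty$ being entirely analogous). The goal is to produce a fixed interval on which $u(t,\cdot)$ stays bounded away from $0$, which contradicts the assumption that $u$ vanishes. The first step is to upgrade the vanishing hypothesis to a \emph{uniform} one: in case (b) of the definition of vanishing we already have $\|u(t,\cdot)\|_{L^\infty}\to 0$; in case (a) we have $g_*<h_*$ together with the local uniform convergence $u(t,\cdot)\to 0$, so Lemma \ref{lem:uniform vanish} applies and again yields $m(t):=\|u(t,\cdot)\|_{L^\infty([g(t),h(t)])}\to 0$ as $t\to T_*$. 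Thus in all cases I may assume $m(t)\to 0$.

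Next I would set up the contradiction framework. If $h_*=+\infty$, then necessarily $T_*=\infty$, since the speed bound $h'(t)\le C_2$ from Lemma \ref{lem:bound-general} prevents $h$ from reaching $+\infty$ in finite time. By Lemma \ref{lem:finite-osc}, $h(t)-b$ changes sign only finitely often for every $b$, so $h$ is eventually monotone and hence increases to $+\infty$; Lemma \ref{lem:center} then forces $g(t)\to -\infty$, and the length $L(t):=h(t)-g(t)\to\infty$. Moreover, for $t$ large Lemma \ref{lem:monotonicity} gives that $u(t,\cdot)$ is monotone near the two fronts (decreasing on $[h_0,h(t)]$, increasing on $[g(t),-h_0]$), and $h(t)$ eventually exceeds any prescribed level.

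The core of the argument is a comparison with a stationary solution of \eqref{ellip-p}. In the principal case, where \eqref{ellip-p} admits a compactly supported solution $V_\alpha$ (Lemma \ref{lem:stationary-alpha}(i); this is exactly the situation occurring in the monostable and bistable applications of Theorem \ref{thm:mono-bi} with $\alpha<\alpha_0$), I would invoke Corollary \ref{cor:h-g}: since $h(t_1)>h_0+2\ell$ for some $t_1$, it gives $u(t,x)>V_\alpha(x-h_0)$ for all $x\in[h_0,h_0+2\ell]$ and $t>t_1$. Evaluating at $x=h_0+\ell$ yields $m(t)\ge u(t,h_0+\ell)>V_\alpha(\ell)=B>0$ for all $t>t_1$, contradicting $m(t)\to 0$. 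For the remaining cases, where the steady profile $\widetilde V_\alpha$ or $\widehat V_\alpha$ of Lemma \ref{lem:stationary-alpha}(ii)--(iii) is not compactly supported, I would run the same comparison by sliding the monotone (resp.\ blow-up) steady state against $u$ at the advancing front; alternatively one can use the mass--length identity obtained by differentiating $\int_g^h u\,dx+L$ and substituting the free boundary conditions and the equation,
\[
\frac{d}{dt}\Big(\int_{g(t)}^{h(t)} u(t,x)\,dx + \big(h(t)-g(t)\big)\Big)=-2\alpha+\int_{g(t)}^{h(t)} f(u)\,dx .
\]
Since $f\in C^1$ with $f(0)=0$, one has $\int_g^h|f(u)|\,dx\le \kappa(m(t))\int_g^h u\,dx$ with $\kappa(m)=\max_{[0,m]}|f'|\to|f'(0)|$; if the reaction integral can be kept below $2\alpha$ for large $t$, then $\int_g^h u\,dx+L$ is eventually strictly decreasing, contradicting $L\to\infty$.

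The hard part is precisely the control of $\int_g^h f(u)\,dx$ in this last step, that is, ruling out that a thin but steep spike persists at the advancing front (maintaining $u_x(t,h(t))\le-\alpha$, so that $h'\ge 0$) while the bulk of $u$ tends to $0$. This obstruction is genuine only when $f'(0)>0$ and no compactly supported steady state is available; there I would exploit that on the lengthening interval the principal Dirichlet eigenvalue $(\pi/L(t))^2$ eventually drops below $f'(0)$, so that a nontrivial solution is forced to grow rather than to decay, contradicting $m(t)\to 0$. Converting this spectral growth, together with the monotonicity of $u$ near the fronts, into the required bound on the reaction integral is the main technical point of the proof.
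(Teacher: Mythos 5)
Your reduction to uniform vanishing via Lemma \ref{lem:uniform vanish}, and your treatment of the case where \eqref{ellip-p} has a compactly supported solution $V_\alpha$ (via Corollary \ref{cor:h-g}), are both correct; the latter is a legitimate alternative route in that case. But the proof is not complete: in the situations of Lemma \ref{lem:stationary-alpha}(ii)--(iii), where no compactly supported steady state exists, you only sketch two possible strategies --- sliding a non-compact steady state, or a mass--length identity combined with a spectral growth argument --- and you yourself flag the key estimate (controlling $\int_{g(t)}^{h(t)} f(u)\,dx$, i.e.\ excluding a thin steep spike at the advancing front that keeps $u_x(t,h(t))\le-\alpha$ while $\|u(t,\cdot)\|_{L^\infty}\to 0$) as unproven. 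That is a genuine gap, not a routine verification: nothing in your setup rules out such a spike unless a barrier or an integral estimate is actually established.

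The paper closes the argument with a much simpler device that makes all of these case distinctions unnecessary. Whatever the sign of $F$ or the size of $\alpha$, the solution $V$ of \eqref{ellip-p} exists on some short interval $[0,X]$ with $V>0$, $V'>0$ there and $V'(0)=\alpha$. Once uniform vanishing gives $u(t,\cdot)\le \rho:=V(X)$ for $t>T$, the reflected translate $V(-x+b)$, with $b$ so large that $h(T)<b-X$, is a stationary upper solution in the sense of Lemma \ref{lem:comp2}: its slope at its zero $x=b$ is exactly $-\alpha$, so $\overline h\equiv b$ satisfies $\overline h'=0\ge -\overline u_x-\alpha$, and its value $\rho$ at the left edge $x=b-X$ dominates $u$ there. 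Hence $h(t)\le b$ for all $t>T$, and symmetrically $g$ is bounded below. In short, the paper uses the smallness of $u$ to build a blocking barrier from above, rather than trying to show that $u$ must stay large somewhere; this is precisely what removes the obstruction you ran into.
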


\begin{proof}
When $T_* <\infty$, the conclusions follow from Lemma \ref{lem:bound-general}, so
we only consider the case $T_* =\infty$.

By the definition of vanishing in section 1, $u$ vanishes as $t\to \infty$ if

(i) $g_\infty<h_\infty$ and $u\to 0$ locally uniformly in $(g_\infty,h_\infty)$, or

(ii) $g_\infty= h_\infty$ and $\|u(t,\cdot)\|_{L^\infty ([g(t),h(t)])} \to 0$ as $t\to \infty$.

\noindent
By Lemma \ref{lem:monotonicity} we have $g_\infty<h_0$ and $h_\infty >-h_0$. So
case (ii) reduces to the conclusions immediately.

Now we consider case (i). Consider the problem \eqref{ellip-p} and
take its solution $V(x)$ on a short interval $[0, X)$. In the case (i),
$\|u(t,\cdot)\|_{L^\infty ([g(t),h(t)])} \to 0$ as $t\to \infty$ by Lemma \ref{lem:uniform vanish}.
Hence, there exists $T>0$ such that
$$
u(t,x) \leq \rho:= V(X),\quad t>T,\ x\in [g(t),h(t)].
$$
Choose a large $b$, then the function $V(-x+b)$ is an upper solution of
problem \eqref{p} and it blocks the extension of $h(t)$. Therefore, $h(t)<b$
and $h_\infty <\infty$. $g_\infty >-\infty$ is proved similarly.
\end{proof}

\begin{lem}\label{lem:T_*<infty}
Assume $u(t,x)\leq C$ for all $0\leq t<T_*$ and $x\in [g(t),h(t)]$.
If $u$ vanishes as $t\to T_*$, then $T_*<\infty$ and $h(t)-g(t)\to 0$ as $t\to T_*$.
\end{lem}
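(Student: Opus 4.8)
The plan is to avoid boundary-gradient estimates entirely and instead track a single scalar functional $\Phi(t) := m(t) + w(t)$, where $m(t) := \int_{g(t)}^{h(t)} u(t,x)\,dx$ and $w(t) := h(t) - g(t)$. Before starting I would collect what the earlier results supply under the vanishing hypothesis: by Lemma \ref{lem:uniform vanish} (together with the definition of vanishing) one has $\|u(t,\cdot)\|_{L^\infty([g(t),h(t)])} \to 0$ as $t \to T_*$, and by Lemma \ref{lem:v->h<infty} the limits $g_*,h_*$ are finite, so that $w$ is bounded on $[0,T_*)$, say $w(t) \le W_0 < \infty$. Also $u>0$, hence $m,w \ge 0$.

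The key computation is the evolution of $\Phi$. Differentiating $m$ with the Leibniz rule and using $u(t,g(t))=u(t,h(t))=0$ gives
\begin{equation*}
m'(t) = \int_{g(t)}^{h(t)} u_{xx}\,dx + \int_{g(t)}^{h(t)} f(u)\,dx = u_x(t,h(t)) - u_x(t,g(t)) + \int_{g(t)}^{h(t)} f(u)\,dx.
\end{equation*}
Now I would substitute the free boundary conditions in the form $u_x(t,h(t)) = -h'(t)-\alpha$ and $u_x(t,g(t)) = -g'(t)+\alpha$; the boundary-flux terms then become $-h'(t)+g'(t)-2\alpha = -w'(t)-2\alpha$, so that the unknown slopes cancel against $w'$ and
\begin{equation*}
\Phi'(t) = m'(t) + w'(t) = \int_{g(t)}^{h(t)} f(u)\,dx - 2\alpha.
\end{equation*}
This cancellation is the whole point: it replaces the delicate pointwise boundary derivatives by the harmless integral of the reaction term.

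To finish I would bound the reaction integral. Since $f\in C^1$ and $f(0)=0$, for $0\le u\le \varepsilon$ we have $f(u)\le K_1 u$ with $K_1 := \sup_{[0,\varepsilon]}|f'|$, whence $\int_g^h f(u)\,dx \le K_1 m(t) \le K_1\varepsilon W_0$. Fixing $\varepsilon$ so small that $K_1\varepsilon W_0 \le \alpha$ and then choosing $T<T_*$ with $\|u(t,\cdot)\|_{L^\infty}\le \varepsilon$ for all $t\in[T,T_*)$, I obtain $\Phi'(t)\le -\alpha$ on $[T,T_*)$. Integrating, $0\le \Phi(t)\le \Phi(T)-\alpha(t-T)$, which forces $t\le T+\Phi(T)/\alpha$; hence $T_*\le T+\Phi(T)/\alpha<\infty$. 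For the shrinking statement I would argue by maximality: if $w_*:=\lim_{t\to T_*}w(t)>0$ then $\inf_{[0,T_*)}w>0$, and since $u$ is bounded Lemma \ref{lem:bound-general} would extend the solution past the finite time $T_*$, contradicting maximality; therefore $w_*=0$, i.e. $h(t)-g(t)\to 0$.

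The main obstacle is conceptual rather than computational: the tempting route is to show the boundary slopes $-u_x(t,h(t))$ and $u_x(t,g(t))$ become small, so that $h'<-\alpha/2$ and $g'>\alpha/2$, but any barrier used to bound those slopes must dominate $u$'s own boundary slope at the restart time, which is precisely the quantity one wants to estimate — a genuine circularity, further aggravated by the fact that the domain may be degenerating. The functional $\Phi$ sidesteps this because it is the \emph{integrated} flux, not the pointwise slope, that couples cleanly to $w'$. The only remaining points to check carefully are the boundedness of $w$ (from the finiteness of $g_*,h_*$) and that a single $\varepsilon$ works uniformly on a terminal time interval, both routine given the cited lemmas.
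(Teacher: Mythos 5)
Your proof is correct, and it takes a genuinely different route from the paper's. The paper proves $T_*<\infty$ by a two-layer comparison: it dominates $u$ (once $\|u\|_{L^\infty}\le\varepsilon$, via Lemma \ref{lem:uniform vanish}) by the solution $\eta$ of an auxiliary free boundary problem with a modified logistic nonlinearity $\bar f\ge f$, and then controls the boundary gradient of $\eta$ with a quadratic barrier $U^\varepsilon$ to get the pointwise rates $\bar h'\le-\alpha/2$, $\bar g'\ge\alpha/2$, forcing the auxiliary interval (hence $[g,h]$) to collapse in finite time; it then rules out $h_*-g_*>0$ with an explicit positive lower solution $\lambda e^{-\lambda_1 t}\cos\frac{\pi x}{2d}$ on a fixed core interval, which would prevent vanishing in finite time. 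Your argument replaces all of this with the exact identity $\frac{d}{dt}\bigl[\int_{g}^{h}u\,dx+(h-g)\bigr]=\int_{g}^{h}f(u)\,dx-2\alpha$, which is valid (the Leibniz boundary terms vanish since $u=0$ there, and $u\in C^{1,2}(G_T)$ justifies the computation), together with the same inputs you correctly cite: $\|u(t,\cdot)\|_{L^\infty}\to0$ from Lemma \ref{lem:uniform vanish} and boundedness of $h-g$ from Lemma \ref{lem:v->h<infty}. The resulting one-line ODE inequality $\Phi'\le-\alpha$ with $\Phi\ge0$ forces $T_*<\infty$, and the extension criterion of Lemma \ref{lem:bound-general} cleanly rules out $h_*-g_*>0$ in place of the paper's lower-solution construction. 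What the paper's route buys is quantitative, pointwise control of the front speeds (the rate $\alpha/2$ and the bound $T_*\le T+2L/\alpha$), a technique reused in Proposition \ref{prop:vanishing}; what yours buys is brevity and robustness, since the delicate boundary slopes enter only through their integrated effect. Two trivial points to tidy: your $K_1=\sup_{[0,\varepsilon]}|f'|$ depends on $\varepsilon$, so choose $\varepsilon$ against the $\varepsilon$-independent bound $\sup_{[0,1]}|f'|$; and note that $w_*>0$ together with continuity and positivity of $w$ on $[0,T_*)$ is what gives $\inf w>0$ before invoking the extension lemma.
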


\begin{proof}
(i) We first show that $T_* <\infty$.
By Lemma \ref{lem:v->h<infty} we have
$$
h(t) \leq L_0 \mbox{ and } -g(t)\leq L_0 ,\quad t\in [0, T_*)
$$
for some $L_0 >0$. Set $L := 2 (1 + L_0)$ and
$$
\eta_0(x):= \frac{2\varepsilon}{L^2} (L^2 -x^2),
$$
where $\varepsilon>0$ is small such that
$$
8\big( \alpha +\sqrt{\alpha^2 +2K_1} \big) \varepsilon \leq \alpha,\quad 32 \varepsilon \leq \alpha.
$$
Here $K_1:= \max\limits_{0\leq u\leq 1} |f'(u)|$.
Consider the problem
\begin{equation}\label{eta-p}
\left\{
\begin{array}{ll}
 \eta_t = \eta_{xx} + \bar{f}(\eta), &  \bar{g}(t)< x<\bar{h}(t),\ t>0,\\
 \eta(t,\bar{g}(t))= \eta (t,\bar{h}(t))=0 , &  t>0,\\
 \bar{g}'(t)= - \eta_x(t, \bar{g}(t)) +\alpha, & t>0,\\
 \bar{h}'(t) = - \eta_x (t, \bar{h}(t)) - \alpha, & t>0,\\
 -\bar{g}(0)=\bar{h}(0)=L,\ \ \eta(0,x) =\eta_0 (x),& -L\leq x \leq L.
\end{array}
\right.
\end{equation}
where
$$
\bar{f}(\eta) := 2K_1 \eta \Big( 1- \frac{\eta}{2\varepsilon} \Big) \ \ \ \
(\ \geq f(\eta)\ \mbox{ for } 0\leq \eta \leq \varepsilon\ ).
$$
By the definitions of $\bar{f}$ and $\eta_0$, we see that $\eta(t,x)\leq 2\varepsilon$
for all $t\geq 0$. Constructing a function
$$
U^\varepsilon (t,x):= 2\varepsilon [2M(\bar{h}(t)-x) - M^2 (\bar{h}(t) -x)^2]
$$
over $Q := \{(t,x): t>0, \max\{ \bar{g}(t), \bar{h}(t)- M^{-1}\} \leq x \leq \bar{h}(t)\}$, where
$M:= \max \{\alpha +\sqrt{\alpha^2 +2K_1}, 4\}$.
Then in a similar way as in the proof of Lemma \ref{lem:bound-general}
we see that $U^\varepsilon (t,x)$ is an upper solution of \eqref{eta-p} over $Q$ and so
$$
- \eta_x(t, \bar{h}(t))\leq - U^\varepsilon_x (t,\bar{h}(t)) = 4M\varepsilon \leq \frac{\alpha}{2}.
$$
Therefore, $\bar{h}'(t)\leq - \frac{\alpha}{2}$.
$\bar{g}'(t)\geq \frac{\alpha}{2}$ since $\eta(t,x)$ is an even function. Thus
$\bar{h}(t)-\bar{g}(t)\to 0$ as $t\to \overline{T}^* \leq \frac{2L}{\alpha}$.

Lemmas \ref{lem:uniform vanish} and \ref{lem:v->h<infty} imply that, for some $T\in (0,T_*)$,
$u(t,x)\leq \varepsilon$ for all $x\in [g(t),h(t)]$ and $t>T$.
Clearly $\eta_0(x) \geq u(T,x)$ for $x\in [g(T),h(T)]$. By comparison principle we have
$h(t+T)-g(t+T)\leq \bar{h}(t)-\bar{g}(t)$ for $t>0$, and so $T_*$ can not be $\infty$.

(ii) Next we prove that $h_* -g_*>0$ is impossible.
Otherwise, we may assume without loss of generality that
$$
g(t)<-d <d< h(t) \quad \mbox{ for all } t\in [0,T_*).
$$
Choose $\lambda >0$ small such that
$$
\zeta_0(x) := \lambda \cos \frac{\pi x}{2d} \leq u_0(x),\quad x\in [-d,d].
$$
Consider the problem
$$
\left\{
 \begin{array}{ll}
 \zeta_t =\zeta_{xx} - K_\lambda \zeta, & -d<x<d,\ t>0,\\
 \zeta(t,\pm d) =0,& t>0,\\
 \zeta(0,x)=\zeta_0(x), & -d\leq x\leq d,
 \end{array}
 \right.
$$
where $K_\lambda >0$ is a constant satisfying
$$
f(u) \geq -K_\lambda u , \quad 0\leq u\leq \lambda.
$$
Taking $\lambda_1 := K_\lambda + \frac{\pi^2}{4 d^2}$, then by comparison principle we have
$$
u(t,x) \geq \zeta(t,x) \equiv \lambda e^{-\lambda_1 t} \cos \frac{\pi x}{2 d},\quad
t>0,\ x\in [-d,d].
$$
Therefore, $u$ can not vanish in finite time, contradicts the conclusion in (i).
\end{proof}

\begin{remark}\label{remark:not (a)}
By the proof of Lemma \ref{lem:T_*<infty} we see that case (a) in the
definition of {\it vanishing} in section 1 indeed does not occur.
\end{remark}

\noindent
{\it Proof of Theorem \ref{thm:vanishing}}. Theorem 1.1 follows from
Theorem \ref{thm:global}, Lemma \ref{lem:s->v} and Lemma \ref{lem:T_*<infty}.
\hfill $\Box$

\section{Proof of Theorem \ref{thm:convergence}}

Following the ideas of \cite{DM, DuLou} with suitable variations
we can prove the following claims:

\smallskip \noindent
{\it Claim} 1:  The $\omega$-limit set $\omega(u)$ of the solution $u$ consists of solutions of
\begin{equation}\label{stationary}
v_{xx}+f(v)=0,\ \ x\in I_\infty.
\end{equation}

\smallskip \noindent
{\it Claim} 2: $I_\infty$ is a finite interval only if \eqref{ellip-p} has
solution $V_\alpha(x)$ as in Lemma \ref{lem:stationary-alpha} (i), and in this case
$\omega(u)=\{V_\alpha (x -g_\infty)\}$.

\smallskip \noindent
{\it Claim} 3: If $I_\infty=\R^1$, then $\omega(u)$ is either a
constant or $\omega(u)=\{V(\cdot+\mu): \mu\in
[\mu_1,\mu_2]\}$ for some interval $ [\mu_1,\mu_2]\subset [-h_0,h_0]$,
where $V$ is an evenly decreasing positive solution of \eqref{stationary}.

\smallskip
\noindent {\it Claim} 4: If $\omega(u)=\{V(\cdot+\mu):
\mu\in [\mu_1,\mu_2]\}$ for some interval
$[\mu_1,\mu_2]\subset [-h_0,h_0]$, then there exists a
continuous function $\gamma: [0,\infty)\to [-h_0,h_0]$ such that
\[
u(t,x)-V(x+\gamma(t))\to 0 \mbox{ as } t\to\infty \mbox{ locally
uniformly in } \R^1.
\]

Clearly, the conclusions of Theorem \ref{thm:convergence} follow
from these claims. \hfill $\Box$

\bigskip
\noindent
{\it Proof of Remark \ref{V_infty>B}}.\ \
By Claims 3 and 4 in the above proof, $\omega(u)=\{V\}$,
or \eqref{to V(gamma)} holds when $I_\infty =\R^1$.
We remark that in both cases,
$$
S:= \{ \bar{v}: \alpha^2 = 2F(\bar{v} )\} \not= \emptyset \quad \mbox{ and } \quad
V_\infty := \lim\limits_{x\to \infty} V(x) \geq B := \min S.
$$
Otherwise, $S=\emptyset$ or $S \not= \emptyset$ and $V_\infty <B$.
This indicates by Lemma \ref{lem:stationary-alpha} the solution $V^*(x)$
of \eqref{ellip-p} is defined on $[0,X]$ and $V^*(X)> V_\infty$.
Therefore $V^* (-x+b)$ for sufficient large $b$ can be an upper solution of \eqref{p}
which blocks the motion of $h(t)$ to goes to $+\infty$. So $I_\infty $
is a finite domain, a contradiction. \hfill $\Box$

\section{Sufficient conditions for vanishing}

In this section we give some sufficient conditions for vanishing, which
answers one question in section 1.

\begin{prop}\label{prop:vanish cond}
Vanishing happens as $t\to T_*$ if \eqref{cond3} and one of the following conditions hold.
  \begin{itemize}
  \item[(i)] $\alpha^2 > \alpha^2_0 := 2\sup_{v>0} F(v)$;
  \item[(ii)] $\alpha^2 =\alpha^2_0 > 2F(v)$ for all $v>0$;
  \item[(iii)] $\alpha^2 =\alpha^2_0 =2F(\bar{v})$ for some $\bar{v}>0$ and $u_0 (x) \leq
                \widetilde{V}_\alpha (x+b)$ for some $b\in \R$;
  \item[(iv)] $\alpha <\alpha_0$, $u_0 \leq V_\alpha (x +\ell)$ and
   $u_0(x)\not\equiv V_\alpha (x + \ell)$, where $V_\alpha (x)$ is the
  stationary solution of \eqref{ellip-p} with compact support $[0,2 \ell]$.
  \end{itemize}
Vanishing also happens if $h_0$ is sufficiently small and if $u$ is bounded.
\end{prop}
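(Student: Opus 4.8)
The plan is to reduce everything to showing $T_*<\infty$: by Theorem~\ref{thm:vanishing} this is equivalent to vanishing, so in each case it suffices to rule out a bounded, positive, time-global solution. I therefore assume $T_*=\infty$; since we deal only with bounded solutions (so that Theorem~\ref{thm:convergence} applies), I take $u\le C$. The common mechanism is to build, from the stationary solutions classified in Lemma~\ref{lem:stationary-alpha}, a \emph{stationary upper solution} $W$ lying above $u_0$ and clamping a free boundary, so that $h_\infty<\infty$ and $g_\infty>-\infty$. Once $I_\infty$ is bounded, there are only two alternatives: either $I_\infty$ degenerates to a point (shrinking, hence $T_*<\infty$ by Theorem~\ref{thm:vanishing}, a contradiction), or, by Theorem~\ref{thm:convergence}, it is a nondegenerate interval on which $u\to V_\alpha$. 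The whole point is to exclude this last alternative.

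\emph{Cases (i)--(iii).} Here $\alpha\ge\alpha_0$, so by Lemma~\ref{lem:stationary-alpha} there is \emph{no} compactly supported solution $V_\alpha$ of \eqref{ellip-p} (that case needs $\alpha<\alpha_0$ and $\ell<\infty$). To clamp the right boundary I use the reflected barrier $\widehat V_\alpha(-x+b)$ in cases (i),(ii): it increases to $+\infty$ at its finite endpoint, hence dominates any bounded $u_0$ after a suitable placement with right endpoint $\ge h_0$, and at its zero it satisfies $\bar h'(t)=0\ge -\bar u_x-\alpha$ (with equality). In case (iii) I use a far translate $\widetilde V_\alpha(-x+b)$: the hypothesis $u_0\le \widetilde V_\alpha(x+b)$ forces $\max u_0<B$, which is exactly what lets the translate dominate $u_0$, and $\widetilde V_\alpha(x+b)\le B$ also gives boundedness directly. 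Symmetric barriers clamp the left boundary, so $I_\infty$ is bounded; since no $V_\alpha$ exists, Claim~2 in the proof of Theorem~\ref{thm:convergence} forbids $I_\infty$ from being a nondegenerate interval, so it degenerates to a point, i.e.\ shrinking occurs, contradicting $T_*=\infty$.

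\emph{Case (iv).} Now $V_\alpha$ does exist, so the above dichotomy does not immediately close; the extra input is $u_0\not\equiv V_\alpha(\cdot+\ell)$. Since $u_0\le V_\alpha(\cdot+\ell)$ forces $[-h_0,h_0]\subseteq[-\ell,\ell]$, Lemma~\ref{lem:comp1} gives $u(t,\cdot)\le V_\alpha(\cdot+\ell)$, and the strong maximum principle together with the Hopf lemma upgrade this to the \emph{strict} separations $u(1,\cdot)<V_\alpha(\cdot+\ell)$, $\ -\ell<g(1)$ and $h(1)<\ell$. I then shift the barrier slightly \emph{inward}: for small $\mu>0$ the uniform gap $\inf_{[g(1),h(1)]}\big(V_\alpha(\cdot+\ell)-u(1,\cdot)\big)>0$ dominates the $O(\mu)$ drop caused by the shift, giving $u(1,\cdot)\le V_\alpha(\cdot+\ell+\mu)$, a stationary upper solution whose right zero sits at $\ell-\mu<\ell$. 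Comparison from $t=1$ onward yields $h(t)\le\ell-\mu$ while $g(t)>-\ell$, hence $h_\infty-g_\infty\le 2\ell-\mu<2\ell$. This contradicts $h_\infty=g_\infty+2\ell$ from Theorem~\ref{thm:convergence} unless $I_\infty$ is degenerate, so again shrinking occurs and $T_*<\infty$.

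\emph{Small $h_0$, and the main obstacle.} For the final assertion I exploit that a narrow Dirichlet domain forces rapid decay: on $[g(t),h(t)]$ the principal eigenvalue is of order $(h-g)^{-2}$, so while the quadratic barrier of Lemma~\ref{lem:bound-general} keeps the boundary speed finite and prevents the interval from widening appreciably over a short time, $u$ drops below the threshold $\varepsilon$ of Lemma~\ref{lem:T_*<infty}; once $u\le\varepsilon$ on a still-narrow interval, the shrinking supersolution of \eqref{eta-p} applies and drives $h(t)-g(t)\to0$ in finite time. The step I expect to be hardest is precisely this quantitative balance: one must verify that the diffusive decay on the narrow domain genuinely outpaces the at-most-$\alpha$-rate drift of the free boundaries — equivalently, that the outward impulse $\int(-u_x(t,h))\,dt$ contributed by the initial gradient stays small while the inward $-\alpha$ drift accumulates linearly. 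The other genuinely non-formal ingredient is the inward shift of the barrier in case (iv) used to beat the exact length $2\ell$; everything else is comparison-principle bookkeeping.
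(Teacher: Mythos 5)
For items (i)--(iv) your argument is essentially the paper's: the same barriers $\widehat V_\alpha(-x+b)$, $\widetilde V_\alpha(-x+b)$ from Lemma \ref{lem:stationary-alpha} clamp the free boundaries, and the nonexistence of a compactly supported stationary solution (Claim 2 in the proof of Theorem \ref{thm:convergence}) rules out a nondegenerate finite $I_\infty$, forcing $T_*<\infty$ and hence vanishing via Theorem \ref{thm:vanishing}. In (iv) you derive the contradiction from the interval length ($h_\infty-g_\infty\le 2\ell-\mu<2\ell$ after the inward shift), whereas the paper derives it from the impossibility of ordering two distinct translates of the bump $V_\alpha$; these are equivalent uses of the same strict-comparison-at-$t=1$ idea, and your strictness claims $g(1)>-\ell$, $h(1)<\ell$ do follow from Hopf's lemma as you assert. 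So the first four items are fine.

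The final assertion (small $h_0$) is where your proposal has a genuine gap, and you flag it yourself without closing it. The quantitative balance you leave unverified is precisely the hard point: the only a priori bound on the outward boundary speed is Lemma \ref{lem:bound-general}, where $C_2=2MC_1-\alpha$ and $M$ contains the factor $\|u_0\|_{C^1([-h_0,h_0])}/C_1$. This is \emph{not} controlled by $h_0$ and $\|u\|_{L^\infty}$ alone (a small, bounded initial datum can have an arbitrarily steep boundary gradient), so "the interval stays narrow long enough for the eigenvalue decay $\pi^2/(h-g)^2-K$ to win" is exactly what you have not proved, and the heuristic as stated does not yield it. The paper avoids this entirely: it replaces $f$ by a monostable $\tilde f\ge f$ on $[0,C]$ decaying fast at infinity, compares with the $\alpha=0$ problem, and invokes Proposition 5.4 of \cite{DuLou}, which gives vanishing for small $h_0$ \emph{independently of the size of the initial data}. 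If you want a self-contained route in the spirit of your sketch, the clean way to control the "outward impulse" is the mass identity obtained by integrating the equation and using both free boundary conditions: $\frac{d}{dt}\bigl[\int_{g(t)}^{h(t)}u\,dx+(h(t)-g(t))\bigr]=-2\alpha+\int_{g(t)}^{h(t)}f(u)\,dx\le -2\alpha+K\int u$, so if $E(0)=\int u_0+2h_0\le 2h_0(1+C)<2\alpha/K$ then $E$ decreases at a definite rate and $h-g$ must reach $0$ in finite time. Without either this identity or the citation, your proof of the last statement is incomplete.
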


\begin{proof}

(i) If $\alpha >\alpha_0$, then the problem \eqref{ellip-p} has solution $\widehat{V}_\alpha(x)$.
Choose $b>0$ large such that
$$
u_0 (x) \leq \widehat{V}_\alpha (-x+b)  \mbox{ on their common existence interval}.
$$
Then $u(t,x) \leq \widehat{V}_\alpha (-x+b)$ on their common existence interval and so
$h(t)$ is blocked by $b$ and can not moves beyond $b$.
Therefore, $u$ converges to $0$ or a nontrivial solution of  $v''+f(v)=0$
with compact support. The latter is impossible in case $\alpha >\alpha_0$.
Hence $u$ vanishes.

(ii) and (iii) are proved in a similar way as (i).

(iv) By the strong comparison principle we have
\begin{equation}\label{u<V}
u(1,x)< V_\alpha(x+ \ell ) \quad \mbox{for all } x\in [g(1),h(1)].
\end{equation}
\eqref{u<V} implies that, there exists $\epsilon_0 >0$ small, such that
$$
u(1,x)< V_\alpha(x+ \ell +\epsilon) \quad \mbox{for all } x\in [g(1),h(1)],\
\epsilon \in [0,\epsilon_0].
$$
By the convergence result (Theorem \ref{thm:convergence}), if $u$ does not vanish
then it converges to $V_\alpha(x+ \ell)$.
Hence $\lim_{t\to \infty} u(t,x) = V_\alpha (x+ \ell) \leq V_\alpha(x+ \ell +\epsilon)$
for all $\epsilon \in [0,\epsilon_0]$, a contradiction.

Finally we prove that vanishing happens when $h_0$ is sufficiently small and $u$ is bounded.
Assume $u\leq C$. Then we can define a new function $\tilde{f}(u)$ as in the proof of Lemma
\ref{lem:T_*<infty} such that
it is of monostable type, it is bigger than $f(u)$ for $u\in [0,C]$ and
it decreases sufficiently fast for large $u$.
Then consider the solution $\tilde{u} $  of problem \eqref{p} with $\alpha =0$
for any initial data with support $[-h_0, h_0]$.
By Proposition 5.4 in \cite{DuLou} we know that when $h_0$ is sufficiently small,
$\tilde{u}$ vanishes. This $\tilde{u}$ is an upper solution of our problem \eqref{p}.
So the solution $u$ of \eqref{p} also vanishes.
\end{proof}

In case $f$ is of (f$_M$) or (f$_B$) type, we have some further sufficient
conditions for vanishing.

\begin{prop}\label{prop:vanishing}
Let $h_0 >0$ and $\phi \in \mathscr{X}(h_0)$. Then $u$ vanishes
if one of the following conditions holds:

\begin{itemize}
\item [\rm (i)] $f$ is of {\rm (f$_M$)} type and $\|\phi\|_{L^\infty}$ is
sufficiently  small;

\item [\rm (ii)] $f$ is of {\rm (f$_B$)} type, $\|\phi \|_{L^\infty} \leq \theta$, or
$\| \phi\|_{L^1 ([-h_0,h_0])}  \leq \theta \cdot \sqrt{ \frac{2\pi}{eK}}$.
\end{itemize}
\end{prop}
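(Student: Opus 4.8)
The plan is to handle the three stated cases by three different devices, each exploiting the resistance $\alpha>0$ in an essential way, and in every case to reduce the goal to ``$T_*<\infty$'' and then invoke Theorem~\ref{thm:vanishing} to upgrade finite-time extinction to vanishing.

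\emph{Part (i), monostable case.} For $f$ of {\rm (f$_M$)} type the only usable hypothesis is the smallness of the amplitude, so I would re-use the shrinking supersolution built in the proof of Lemma~\ref{lem:T_*<infty}. Fix $\varepsilon>0$ so small that the associated free boundary solution $\eta$ of \eqref{eta-p}, with $\bar f$ a logistic nonlinearity dominating $f$ on $[0,2\varepsilon]$ and with support $[-L,L]$ where $L:=2h_0$, satisfies $\bar h'(t)\le-\alpha/2$ and $\bar g'(t)\ge\alpha/2$, so that $\bar h(t)-\bar g(t)\to0$ in finite time $\le 2L/\alpha$; this is exactly the computation in Lemma~\ref{lem:T_*<infty}. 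Since $\eta_0$ is a downward parabola with $\eta_0(0)=2\varepsilon$ and $\min_{[-h_0,h_0]}\eta_0=\eta_0(h_0)=\tfrac32\varepsilon$, the hypothesis $\|\phi\|_{L^\infty}\le\tfrac32\varepsilon$ (a bound depending only on $\alpha$ and $K_1$) forces $\phi\le\eta_0$ pointwise. The comparison principle (Lemma~\ref{lem:comp1}) then gives $u\le\eta$ and $h(t)-g(t)\le\bar h(t)-\bar g(t)$, so $T_*<\infty$; as \eqref{cond2} holds for monostable $f$, Theorem~\ref{thm:vanishing} converts this into vanishing.

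\emph{Part (ii), case $\|\phi\|_{L^\infty}\le\theta$.} Here I would avoid a pointwise supersolution and argue by a mass identity. Because $\phi\le\theta$, $f\le0$ on $[0,\theta]$ and $f(\theta)=0$, the constant $\theta$ is a supersolution, so the maximum principle gives $u(t,\cdot)\le\theta$ (indeed $<\theta$ for $t>0$), hence $f(u)\le0$ throughout. Writing $w(t):=h(t)-g(t)$ and $m(t):=\int_{g(t)}^{h(t)}u\,dx$, differentiating and using $u(t,g)=u(t,h)=0$ together with $u_x(t,h)=-h'-\alpha$, $u_x(t,g)=\alpha-g'$ yields
\[
\frac{d}{dt}\big(m(t)+w(t)\big)=-2\alpha+\int_{g(t)}^{h(t)}f(u)\,dx\le-2\alpha .
\]
Thus $m+w\le m(0)+w(0)-2\alpha t$; since $m,w\ge0$ this forces $T_*\le\big(m(0)+w(0)\big)/(2\alpha)<\infty$, and Theorem~\ref{thm:vanishing} again gives vanishing.

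\emph{Part (ii), case of small $L^1$ norm.} The idea is to use the linear bound \eqref{cond2} to push the solution below $\theta$ and then quote the previous case. Let $W$ solve the Cauchy problem $W_t=W_{xx}+KW$ on $\R$ with $W(0,\cdot)=\phi$ extended by $0$; since $W>0$ for $t>0$, the parabolic comparison on the moving domain of $u$ (where $(W-u)_t-(W-u)_{xx}-K(W-u)=Ku-f(u)\ge0$, and $u=0\le W$ on the free boundaries) gives $u\le W=e^{Kt}\,\Gamma_t*\phi$ with $\Gamma_t$ the heat kernel. Hence $u(t,x)\le e^{Kt}(4\pi t)^{-1/2}\|\phi\|_{L^1}$, whose minimum over $t$ is attained at $t_1=1/(2K)$ with value $\sqrt{eK/(2\pi)}\,\|\phi\|_{L^1}$. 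The hypothesis $\|\phi\|_{L^1}\le\theta\sqrt{2\pi/(eK)}$ makes this $\le\theta$, so $u(t_1,\cdot)\le\theta$; restarting from $t=t_1$ places us in the situation just treated, and vanishing follows (unless shrinking occurred before $t_1$, in which case we are already done).

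\emph{Expected obstacles.} The delicate points are all at the moving boundary. In part (ii) I must justify that $u\le\theta$ is maintained globally via the free-boundary maximum principle for the constant supersolution, and that $m(t)$ may be differentiated with the correct boundary terms dropping out; both are routine given the regularity of Theorem~\ref{thm:local} and Lemma~\ref{lem:bound-general}, but they rely on $u$ being $C^1$ up to the boundaries. The comparison $u\le W$ across the moving lateral boundary is the main place where the sign $u_x(t,h)<0$, equivalently $h'>-\alpha$ from Lemma~\ref{lem:bound-general}, is implicitly used. Finally, in part (i) the one point demanding slightly more care than in Lemma~\ref{lem:T_*<infty} is verifying $\bar f\ge f$ on the full range $[0,2\varepsilon]$ of $\eta$, which requires choosing the logistic $\bar f$ with its zero beyond $2\varepsilon$ rather than at $2\varepsilon$.
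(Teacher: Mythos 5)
Your overall strategy (reduce everything to $T_*<\infty$ and invoke Theorem~\ref{thm:vanishing}) is sound, and for part (ii) you take a genuinely different and self-contained route. The paper disposes of (ii) (and of (i) when $h_0<\pi/(2\sqrt{f'(0)})$) in one line, by observing that the solution $\tilde u$ of the $\alpha=0$ problem is an upper solution and quoting Theorem 3.2 of \cite{DuLou}. Your mass-plus-length identity
\[
\frac{d}{dt}\Bigl(\int_{g(t)}^{h(t)}u\,dx+h(t)-g(t)\Bigr)=-2\alpha+\int_{g(t)}^{h(t)}f(u)\,dx\le-2\alpha
\]
(valid once $u\le\theta$ is propagated by the constant supersolution $\theta$, so $f(u)\le 0$) gives the explicit bound $T_*\le(\|\phi\|_{L^1}+2h_0)/(2\alpha)$ and uses $\alpha>0$ in exactly the way the problem is designed to exploit; the boundary bookkeeping $u_x(t,h)=-h'-\alpha$, $u_x(t,g)=\alpha-g'$ is correct. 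Your heat-kernel step for the small-$L^1$ case reproduces the constant $\theta\sqrt{2\pi/(eK)}$ exactly and the restart at $t_1=1/(2K)$ is legitimate (and if $T_*\le t_1$ you are already done). This is a nice alternative that avoids importing results from the $\alpha=0$ theory.

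The one genuine soft spot is part (i). You reuse the shrinking supersolution $\eta$ of \eqref{eta-p} from Lemma~\ref{lem:T_*<infty}, and you correctly flag that the comparison needs $\bar f\ge f$ on the full range $[0,2\varepsilon]$ of $\eta$, not just on $[0,\varepsilon]$. But your proposed repair --- ``choose the logistic $\bar f$ with its zero beyond $2\varepsilon$'' --- does not close the gap: if the zero of $\bar f$ is moved to $a>2\varepsilon$, then $[0,a]$ becomes the invariant region for $\eta$, and since a logistic nonlinearity vanishes at its own carrying capacity while a monostable $f$ is strictly positive there, one can never have $\bar f\ge f$ on all of $[0,a]$ unless $a\ge 1$, which destroys the smallness of $\eta$. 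The paper's own proof of this part avoids the issue entirely and is the cleaner route: it compares $u$ with the solution $\eta(t,x)=\varepsilon^2 e^{(K-\pi^2/(4h_0^2))t}\cos\frac{\pi x}{2h_0}$ of the \emph{linear} problem $\eta_t=\eta_{xx}+K\eta$ on the \emph{fixed} interval $[-h_0,h_0]$ (here $K$ from $f(u)\le Ku$ is valid on the whole range, so no domination problem arises), checks that the boundary flux $-\eta_x(t,\pm h_0)$ stays below $\alpha$ up to a time $T>2h_0/\alpha$ so that the free boundaries cannot leave $[-h_0,h_0]$ and $u\le\eta\le C$ there, and only then applies the quadratic barrier $U$ \emph{directly to $u$} to get $h'\le-\alpha/2$ and $g'\ge\alpha/2$, forcing shrinking by time $2h_0/\alpha<T$. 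You should either adopt that linear-barrier argument or replace your logistic $\bar f$ by a nonlinearity that genuinely dominates $f$ on the invariant region of $\eta$ (e.g.\ a truncation of $Ku$), re-deriving the bound $-\eta_x(t,\bar h(t))\le\alpha/2$ for that choice.
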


\begin{proof}
Consider the problem \eqref{p} with $\alpha =0$, denote its solution by $\tilde{u}(t,x)$.
By comparison principle we easily have $u\leq \tilde{u}$, that is, the solution $\tilde{u}$ is an 
upper solution of \eqref{p}. So the conclusions of (ii), as well as the conclusions of (i) 
in case $h_0 <\pi/(2\sqrt{f'(0)})$, follow from \cite[Theorem 3.2]{DuLou} immediately.

Now we prove (i) for any $h_0>0$. Since $f$ is of (f$_M$) type, there exists
$K>0$ such that $f(u)\leq Ku\ (u\geq 0)$. Choose $C>0$ such that
\begin{equation}\label{def-C}
2 (\alpha +\sqrt{\alpha^2 +2K} ) C\leq  \alpha,\quad 3C \leq 1.
\end{equation}
For this $C$, we take $\varepsilon >0$ sufficiently small such that
\begin{equation}\label{choose epsilon}
\varepsilon <  \frac{2h_0 \alpha} {\pi C},\quad 16 \varepsilon^2 \Big(1+\frac{\pi}{2h_0}\Big)
\leq 3 \alpha.
\end{equation}
Now we consider the problem
\begin{equation}\label{eta-p-1}
\left\{
\begin{array}{ll}
 \eta_t = \eta_{xx} + K\eta, &  -h_0 < x< h_0,\ t>0,\\
 \eta (t,\pm h_0)= 0 , &  t>0,\\
  \eta (0,x) = \tilde{\phi}(x),& -h_0\leq x \leq h_0,
\end{array}
\right.
\end{equation}
where $\tilde{\phi}(x):=\varepsilon^2 \cos \frac{\pi x}{2h_0}$. Clearly
$\|\tilde{\phi}\|_{C^1 ([-h_0,h_0])}\leq 3\alpha /16$ by the choice of $\varepsilon$. The
solution of \eqref{eta-p-1} is
$$
\eta(t,x) = \varepsilon^2 e^{ \big(K-\frac{\pi^2}{4h^2_0} \big)t} \cos \frac{\pi x}{2h_0}.
$$
Set
$$
T:= \frac{1}{K} \log \frac{C}{\varepsilon} > \frac{2h_0}{\alpha}.
$$
The last inequality follows form the choice of $\varepsilon$.
Denote the solution of \eqref{p} with initial data $u_0(x)=\phi(x)$
by $u(t,x)$.  Since
$$
-\eta_x(t,h_0) \leq \varepsilon^2 e^{KT} \frac{\pi }{2h_0} =
\frac{\pi C \varepsilon}{2h_0} <\alpha\quad \mbox{ for } 0\leq t \leq T,
$$
by the choice of $\varepsilon$, $\eta$ is an upper solution of \eqref{p} and
$$
u(t,x) \leq \eta(t,x) \leq \eta(t,0) \leq \varepsilon^2 e^{Kt} \leq C \quad
\mbox{ for }0\leq t\leq T.
$$

Now we construct function
$$
U(t,x):= C[2M (h(t)-x ) -M^2 (h(t)-x)^2]
$$
over $Q:= \{(t,x): 0\leq t<T, \max\{ h(t)-M^{-1}, g(t)\} \leq x\leq h(t)\}$ as above,
where
$$
M:= \max \Big\{ \frac{\alpha +\sqrt{\alpha^2 +2K}}{2},\ \frac{4\|\tilde{\phi}(x)\|_{C^1}}{3C}\Big\}.
$$
A similar discussion as in the previous sections shows that $u(t,x)\leq U(t,x)$ in $Q$
and so
$$
-u_x(t,h(t)) \leq - U_x(t,h(t)) = 2MC \leq \frac{\alpha}{2}
$$
by the choice of $C$. Therefore,
$$
h'(t) =-u_x(t,h(t))-\alpha \leq -\frac{\alpha}{2},
$$
$g'(t)\geq \frac{\alpha}{2}$ is proved similarly,  so
$$
h(t)-g(t) \leq 2h_0 - \alpha t \to  0\quad \mbox{as } t\to \frac{2h_0}{\alpha} <T.
$$
Therefore, shrinking happens for $(u,g,h)$ in finite time and so
vanishing happens by Theorem \ref{thm:vanishing}.
Finally any solution of \eqref{p} with initial data less than
$\phi$ also vanishes in finite time.
\end{proof}

\section{Proof of Theorem \ref{thm:mono-bi}}
The first half of the theorem, that is, trichotomy result (spreading, vanishing or
transition) follow from Theorems \ref{thm:vanishing} and \ref{thm:convergence} immediately.

Now we prove the second half of the theorem.
The proof is similar as those in \cite[Theorems 5.2 and 5.6]{DuLou}. For
the readers' convenience we give the details below.
By Proposition \ref{prop:vanishing},
the solution $u(t,x;\sigma \phi)$ of \eqref{p} with initial data $\sigma \phi$
vanishes provided $\sigma>0$ is small. Therefore
\[
\sigma^*=\sigma^*(h_0,\phi):=\sup \big\{\sigma_0: u(t,x;\sigma \phi) \mbox{ vanishes
for } \sigma\in (0,\sigma_0]\big\}\in(0,+\infty].
\]
If $\sigma^*=+\infty$, then there is nothing left to prove. So we
assume that $\sigma^*$ is a finite positive number.

By definition, vanishing happens for all $\sigma\in (0,\sigma^*)$.
We now consider the case $\sigma=\sigma^*$. In this case, we cannot
have vanishing, for otherwise we have, for some large $t_0>0$,
$$
u(t_0,x)<\tilde{\phi}(x):= \varepsilon^2 \cos \frac{\pi (x-b)}{2},\quad
x\in [g(t_0), h(t_0)]
$$
for some $b\in \R$, where $\varepsilon$ is chosen as in the proof of
Proposition \ref{prop:vanishing} in (f$_M$) case, and $\varepsilon^2 < \theta$
in (f$_B$) case.  Due to the continuous
dependence of the solution on the initial values, we can find
$\epsilon>0$ sufficiently small such that the solution $(u_\epsilon,
g_\epsilon, h_\epsilon)$ of \eqref{p} with $u_0=(\sigma^*+\epsilon)\phi$ satisfies
\[
u_\epsilon (t_0,x)< \tilde{\phi}(x),\quad x\in [g_\epsilon (t_0), h_\epsilon(t_0)].
\]
Hence we can apply Proposition \ref{prop:vanishing} and its proof to
conclude that vanishing happens for $(u_\epsilon, g_\epsilon,
h_\epsilon)$, a contradiction to the definition of $\sigma^*$. Thus
at $\sigma=\sigma^*$ either spreading or transition happens.

We show next that spreading cannot happen at $\sigma=\sigma^*$.
Suppose this happens. Let $V_\alpha $ be the solution of \eqref{ellip-p}.
Then we can find $t_0>0$ large such that
\begin{equation}\label{u-V_alpha}
 [-\ell  ,\ell]\subset (g(t_0), h(t_0)),\; u(t_0, x)> V_\alpha (x-\ell) \mbox{ in
} [-\ell , \ell ].
\end{equation}
By the continuous dependence of the solution on initial values, we
can find a small $\epsilon>0$ such that the solution $(u^\epsilon,
g^\epsilon, h^\epsilon)$ of \eqref{p} with
$u_0=(\sigma^*-\epsilon)\phi$ satisfies \eqref{u-V_alpha}. Hence spreading happens for $(u^\epsilon,
g^\epsilon, h^\epsilon)$. But this is a contradiction to the
definition of $\sigma^*$.

Hence transition must happen when $\sigma=\sigma^*$. We show next
that spreading happens when $\sigma>\sigma^*$. Let $(u,g,h)$ be a
solution of \eqref{p} with some $\sigma>\sigma^*$, and denote the
solution of \eqref{p} with $\sigma=\sigma^*$ by $(u^*,g^*,h^*)$. By
the comparison theorem we know that
\[
[g^*(1), h^*(1)]\subset (g(1), h(1)),\; u^*(1,x)<u(1,x) \mbox{ in }
[g^*(1),h^*(1)].
\]
Hence we can find $\epsilon_0>0$ small such that for all
$\epsilon\in [0,\epsilon_0]$,
\[
[g^*(1)-\epsilon, h^*(1)-\epsilon]\subset (g(1), h(1)),\;
u^*(1,x+\epsilon)<u(1,x) \mbox{ in }
[g^*(1)-\epsilon,h^*(1)-\epsilon].
\]
Now define
\[
\tilde{u}_\epsilon(t,x)=u^*(t+1, x+\epsilon),\;
\tilde{g}_\epsilon(t)=g^*(t+1)-\epsilon,\; \tilde{h}_\epsilon(t)=h^*(t+1)-\epsilon.
\]
Clearly $(\tilde{u}_\epsilon, \tilde{g}_\epsilon, \tilde{h}_\epsilon)$ is a solution of
\eqref{p} with $u_0(x)=u^*(1, x+\epsilon)$. By the comparison
principle we have, for all $t>0$ and $\epsilon\in (0,\epsilon_0]$,
\[
[\tilde{g}_\epsilon(t), \tilde{h}_\epsilon(t)]\subset (g(t+1), h(t+1)), \;
\tilde{u}_\epsilon(t,x)\leq u(t+1,x) \mbox{ in } [\tilde{g}_\epsilon(t),
\tilde{h}_\epsilon(t)].
\]
If $\omega(u^*)=\{V_\alpha (\cdot-b^*)\}$ and $\omega(u)=\{V_\alpha(\cdot-b)\}$
for some fixed $b^*,b\in \R$, then
it follows from the above inequalities that
\[
V_\alpha (x-b^*+\epsilon)\leq V_\alpha (x-b) \mbox{ for all $x\in [b^*-\epsilon, 2\ell+b^*-\epsilon]$ and
$\epsilon\in (0,\epsilon_0]$.}
\]
By the definition of $V_\alpha (x)$ we have $b^*-\epsilon =b$ for all
$\epsilon \in (0,\epsilon_0]$. This is impossible and so $\omega(u) =\{1\}$,
that is, $u$ spreads. \hfill $\Box$

\begin{remark}
From Propositions 5.4 and 5.8 in \cite{DuLou} we know that if $-f(u)$ grows very fast,
then there exists $\tilde{h}_0$ small such that, the solution of \eqref{p} with
$\alpha=0$ vanishes no matter how large the initial data is, provided $h_0 \leq \tilde{h}_0$.
Since such solutions are upper solution to our problem \eqref{p} ($\alpha>0$),
we know that when $-f$ grows very fast and when $h_0$ is sufficiently small,
we have $\sigma^*(h_0,\phi)=\infty$.
\end{remark}

\bigskip

{\bf Acknowledgement.} The authors would like to thank Professors Y. Du and Z. Lin
for valuable discussion on the free boundary conditions.

\end{document}